\begin{document}

\setlength{\parskip}{0.3\baselineskip}

\newtheorem{theorem}{Theorem}
\newtheorem{corollary}[theorem]{Corollary}
\newtheorem{lemma}[theorem]{Lemma}
\newtheorem{proposition}[theorem]{Proposition}
\newtheorem{definition}[theorem]{Definition}
\newtheorem{remark}[theorem]{Remark}
\renewcommand{\thefootnote}{\alph{footnote}}


\newcommand{\dual}[2]{\langle{#1},{#2}\rangle}
\newcommand{\intO}[1]{\int_{\Omega}{#1}\, dx}

\newcommand{\R}{{\mathbb R}}
\newcommand{\N}{{\mathbb N}}

\title{\bf Gevrey Class: Plate Euler-Bernoulli and Membrane-like electric network with indirect fractional damping}
\author{Fredy Maglorio Sobrado  Su\'arez\\
 and \\
Filomena Barbosa Rodrigues Mendes
}

\date{}
\maketitle

\let\thefootnote\relax\footnote{{\it Email address:} {\rm fredy@utfpr.edu.br} (Fredy Maglorio Sobrado Suárez (Department of Mathematics, Federal University of Technology  of Paran\'a, Brazil)),  {\rm filomena@utfpr.edu.br} (Filomena 
	Barbosa Rodrigues Mendes (Department of Electrical Engineering,  The Federal University of Technology    of Paran\'a, Brazil)).}

\begin{center} {\bf Dedication:} This work is dedicated to Susana Suárez de Sobrado. \end{center}
\begin{abstract}
The emphasis in this paper is on  the Coupled System of a Kirchhoff-Love Plate Equation with the
Equation of a Membrane-like Electrical Network, where the coupling is of higher order given
by the Laplacian of the displacement velocity  $\gamma\Delta u_t$ and the Laplacian of the potential electric field $\gamma\Delta v_t $,  here only one of the equations is conservative, and
the other has dissipative properties. The mechanism was dissipative is given by an intermediate damping $(-\Delta)^\theta v_t$    between the potential electric  $\theta=0$  (frictional damping) and
the Laplacian of the electric potential for $\theta=1$ (damping Kelvin Voigt).  We show that $S(t)=e^{\mathbb{B}t}$ is
not analytic for $\theta\in [0, 1[$ and analytic for $\theta=1$, however $S(t)=e^{\mathbb{B}t}$ decays exponentially for 
$0\leq \theta\leq 1$ and $S(t)$ is of Gevrey  sharp class $s>\frac{1}{\theta}$   when the parameter $\theta$ lies in the interval $]0,1[$.
		 
\end{abstract}

\bigskip
{\sc Keywords and phrases:} Electric Network Equation,  Euler-Bernoulli Plates,  Gevrey's sharp classes,  Lack of Analyticity,  Exponential Decay.

\setcounter{equation}{0}

\section{Introduction}

In the literature, several mathematical models describe a single electrical network connecting piezoelectric actuators and/or transducers, see for example, \cite{ASIsolaFMP}, \cite{CMIsolaDDVescovo} or \cite{VidoliIsola}. In particular, in \cite{CMIsolaDDVescovo}, equations  (2b) and  (2c),   we have, for example, the equations of a second-order electric transmission line with zero order or second-order dissipation:\\
(S,Z) and (S,S)-network: second-order network with zeroth-order dissipation and second-order dissipation
\begin{equation}\label{(2b)}
v_{tt}-\beta_2\Delta v+\delta_0v_t=0\qquad\text{and}\qquad v_{tt}-\beta_2\Delta v-\delta_2\Delta v_t=0.
\end{equation}
Where $v(x, t)$ denotes the time-integral of the electric potential difference between the nodes and the ground.  Note that in the first equation of \eqref{(2b)} we have the frictional damping and in the second we have the viscous damping or Kelvin Voigt.

The motivation for this research was born from the  coupled system of the Euler-Bernoulli  Plates and Membrane-Like Electric Network deduced in \cite{VidoliIsola} as follows:
\begin{eqnarray}
\label{ISplacas0}
    u_{tt}+\alpha\Delta^2 u-\gamma\Delta v_t=0,\quad
    &x\in\Omega,&
t>0, \\
\label{ISplacas1}
    v_{tt}
    -\beta\Delta v+\gamma\Delta u_t+\delta v_t+\delta\gamma\Delta u =0,\quad
    &x\in\Omega,&
     t>0,
\end{eqnarray}

satisfying the  boundary conditions
\begin{equation}\label{ISplacas4}
    u=\Delta u=0,\quad v=0,\quad x\in\partial\Omega,\ t>0,
\end{equation}
and prescribed initial data
\begin{eqnarray}
\label{ISplacas2} u(x,0)=u_0(x),\ u_t(x,0)=u_1(x),\
v(x,0)=v_0(x),\ v_t(x,0)=v_1(x), \quad x\in\Omega.
\end{eqnarray}
Here,  $u(x,t)$ denotes the transversal displacements of the plates and $v(x,t)$ is time- integral of the electric potential difference between the nodes and the ground, and $\Omega\subset \R^n$  the domain with smooth boundary $\partial\Omega$. The coefficients $\alpha,\beta, \delta$ are positive and $\gamma$ is non-zero, for details of the physical meaning and as determined each of the coefficients consult the deduction of the Physical-Mathematical model on pages $441$ and $442$ of reference \cite{VidoliIsola}. For more details on modeling, the reference \cite{ASIsolaFMP} can also be consulted.

Our purpose in this work is to study a more general system, to this end, we will consider in the equation of the electrical network the fractional dissipation $(-\Delta)^\theta v_t $ for  $ 0\leq\theta\leq 1 $, keep in mind that for the particular cases $\theta= 0$ and $\theta=1$ the mathematical models are given by equations in  \eqref{(2b)}   of  \cite{VidoliIsola}  respectively.

 We will write the system under study in its abstract form. For this purpose, we introduce some helpful notations beforehand.  Let $\Omega$ a bounded set in $\R^n$ with smooth boundary and given the operator:
$A:D(A)\subset L^2(\Omega)\to L^2(\Omega)$, where
\begin{eqnarray}\label{Omenoslaplaciano}
A=-\Delta,\quad D(A)=H^2(\Omega)\cap H^1_0(\Omega).
\end{eqnarray}
It is known that this operator given in \eqref{Omenoslaplaciano} is selfadjoint, positive, compact inverse, and compact resolvent. Using this $A$ operator, our proposed system, written abstractly,  is as follows:
\begin{eqnarray}
\label{ISplacas-20}
    u_{tt}+ \alpha A^2 u+\gamma Av_t=0, \quad x\in\Omega,\quad t>0,\\
\label{ISplacas-25}
    v_{tt}+ \beta A v-\gamma Au_t+\delta A^{\theta} v_t =0,\quad x\in\Omega,\quad t>0,
\end{eqnarray}
and contemplates the boundary conditions \eqref{ISplacas4} and initial data \eqref{ISplacas2}.

In the last decades, many researchers have focused on studying the asymptotic stability of several coupled systems with indirect damping (Terminology initially used by Russell in his work \cite{Russell-1993}). Systems of two coupled equations as wave-wave, plate-plate, or plate-wave equations with indirect damping inside of their domains or on their boundaries, were studied by several authors. We are going  briefly mention some of these works:

 Alabau et al.  in \cite{Alabau-2002}. They considered abstract evolution equations given by:
\begin{eqnarray*}
	u_{tt}+ A_2 u+\alpha v=0,  \quad x\in\Omega,\quad t>0,\\
	v_{tt}+ A_1 v+\beta Bv_t+\alpha u=0, \quad x\in\Omega,\quad t>0,
\end{eqnarray*}
in which  $\Omega$ be a bounded open set of $\R^n$ with smooth boundary $\partial\Omega$ and $A_1$, $A_2$ are self-adjoint positive linear operators in Hilbert space and $B$ is a bounded operator. When $A_1=-\Delta=A, A_2=\Delta^2$ and $B$ is the identity operator, we have a wave-Petrowsky system,
where $\beta > 0$, with partial frictional damping $\beta u_t$. For this case, they showed that, if $0<|\alpha|<C_{\Omega}^{3/2}$ and 
$$v_0\in H^3(\Omega)\cap H_0^2(\Omega), \quad u_0\in H^6(\Omega)\cap H_0^3(\Omega),$$
$$v_1\in H^2(\Omega)\cap H_0^1(\Omega),\quad u_1\in H^4(\Omega)\cap H_0^2(\Omega).$$
Then the energy of the solution satisfies, for every $t>0$, the estimate
\begin{gather*}
\hspace*{-7cm}	\int_\Omega(|\partial_tv|^2+|\nabla v|^2+|\partial_t u|^2+|\Delta u|^2)dx \\
\leq 
 \dfrac{C}{t}(\|v_0\|^2_{3,\Omega}+\|u_0\|_{6,\Omega} +\|v_1\|^2_{2,\Omega}+\|u_1\|^2_{4,\Omega}).
\end{gather*}
 In this direction, other results can be found in \cite{Alabau-2011, EngelPruss-2006, Guglielmi2015,Hao-2015,Suarez}.

Alabau et al. \cite{Alabau-2011} (see also \cite{Alabau1999, Alabau-2002, Alabau2002-2}) considered an abstract system of two coupled evolution equations with applications to several hyperbolic systems satisfying hybrid boundary conditions. They have shown their solutions' polynomial decay using energy and multiplicative techniques. Tebou \cite{Tebou2012} considered a weakly coupled system of plate-wave equations with indirect frictional damping mechanisms. He showed this system is not exponentially stable when showed the damping acts either in the plate equation or in the wave equation, and a polynomial decay of the semigroup using a frequency domain approach combined with multiplier techniques, and a recent Borichev and Tomilov\cite{Borichev} result in the characterization of polynomial decay of bounded semigroups. Recently, Guglielmi \cite{Guglielmi2015} considered two classes of systems of weakly coupled hyperbolic equations wave-wave equation and a wave-Petrovsky system. When the wave equation is frictionally damped, he proved that this system is not exponentially stable, and a polynomial decay was obtained. Provided o result of the optimal decay rate was provided.    Many other papers were published in this direction; viewed in \cite{OquendoRaya2017, Renardy,Tebou2012, Tebou-2017}.

Now we will mention some concrete problems that motivated the work in  of this paper:

Han and Liu in \cite{HanLiu} have recently studied the regularity and asymptotic behavior of two-plate system solutions where only one of them is dissipative and indirect system dissipation occurs through the higher order coupling term $\gamma\Delta w_t$ and $-\gamma\Delta u_t$. The damping mechanism considered in this work was structural or Kelvin-Voigt damping. More precisely, the system studied in \cite{HanLiu} is:
\begin{eqnarray*}
	u_{tt}+\Delta^2 u+\gamma\Delta w_t=0,\quad
	&x\in\Omega,&
	t>0, \\
	w_{tt}+\Delta^2 w-\gamma \Delta u_t-d_{st}\Delta  w_t+d_{kv}\Delta^2w_t =0,\quad
	&x\in\Omega,&
	t>0,
\end{eqnarray*}
satisfying the  boundary conditions
\begin{equation*}
u=\frac{\partial u}{\partial\nu}=0,\quad w=\frac{\partial w}{\partial\nu}=0,\quad \ t>0,\quad x\in \partial\Omega,
\end{equation*}
where $u(x,t)$, $w(x,t)$ denote the transversal displacements of the plates at time $t$ in the domain $\Omega\subset \R^n$ with smooth boundary $\partial\Omega$, $\gamma\not=0$ is the coupling coefficient. 
They showed that if $d_{st}>0 $ and $d_{kv}=0$, the semigroup associated with the system is analytic and for $d_{st}=0$ and $d_{kv}>0$, they showed that $ S(t)$ is exponential but not analytic.

In 2013, Dell'Oro et al. in \cite{Dell'Oro}. They considered the abstract system  with fractional partial  damping:
\begin{eqnarray*}
	u_{tt}+\gamma A u_{tt}+ A^2 u-A^{\sigma}\phi=0,  \quad x\in\Omega,\quad t>0,\\
	\phi_{t}+ A \phi+A^{\sigma} u_t =0, \quad x\in\Omega,\quad t>0,
\end{eqnarray*}
where $\Omega$ be a bounded open set of $\R^n$ with smooth boundary $\partial\Omega$ and when $A=-\Delta$ as in \eqref{Omenoslaplaciano} this system models a  thermoelastic plate, where the parameter $\gamma\geq 0$ is responsible for the rotational inertia,
which is proportional to the plate thickness, $\gamma=0$, corresponding to the case of a thin plate.  They showed that the semigroup of this system is exponentially stable if and only if $\sigma\geq 1$. Moreover, when $1/2\leq \sigma<1$, they proved that the semigroup decays polynomially to zero as $t^{-1/(4-4\sigma)}$ for initial data in the domain of the semigroup generator, and such a decay rate is optimal. In this same work, they also showed that for the case $\gamma=0$ and $0\leq \sigma<1/2$, the semigroup decays polynomially with the optimal rate $t^{-1/(1-2\sigma)}$. Other results in this direction can be found in \cite{EngelPruss-2006,Shibata,Tebou-2010,Tebou-2013}.

A more recent result involving fractional dissipation was published in 2019 by Oquendo-Suárez \cite{HPOquendo}, they studied the following abstract system:
\begin{eqnarray*}
\rho_1u_{tt}+\gamma_1 A u_{tt}+\beta_1 A^2 u+\alpha v=0,  \quad x\in\Omega,\quad t>0,\\
\rho_2v_{tt}+\gamma_2 A v_{tt}+\beta_2 A^2 v+ \alpha u+\kappa A^{\theta} v_t =0, \quad x\in\Omega,\quad t>0,
\end{eqnarray*}
 where $\Omega$ be a bounded open set of $\R^n$ with smooth boundary $\partial\Omega$ and one of these equations is conservative and the other has fractional dissipative properties given by $A^{\theta} v_t $, where $0\leq\theta\leq 1$ and $A=-\Delta$ as in \eqref{Omenoslaplaciano}   and where the coupling terms are $\alpha u $ and $\alpha v $.  They showed that the  semigroup decays polynomially with a rate that depends on $\theta$ and some relations between the structural coefficients of the system. Have also shown that the rates obtained are optimal using a spectral characterization theorem of semigroup polynomial stability due to Borichev and Tomilov \cite{Borichev}.
 
Concerning the regularity of the semigroup associated with plate models, we can cite the work of \cite{LMJAIME2012} of 2012; in that work, the authors study the differentiability and analyticity of the associated semigroup and also determine the optimum rate of decay and more recently published works explore the regularity of solutions using the Gevrey classes introduced in 1989 in the thesis of Taylor \cite{TaylorM}. Among these works, we can mention Hao-Liu-Yong \cite{Hao-2015} and, more recently, the paper of Keyantuo-Tebou-Warma \cite{Tebou-2020} to be published. In this last work, the authors studied the thermoelastic plate model with a fractional Laplacian between the Euler-Bernoulli and Kirchhoff model with two types of boundary conditions; in addition to studying the asymptotic and analytical behavior, the authors show that the underlying semigroups are of Gevrey class $\delta$ for every $\delta>\frac{2-\theta}{2-4\theta}$ for both the clamped and hinged boundary conditions when the parameter $\theta$ lies in the interval $]0,1/2[$.

This article was organized as follows: In section 2, we study the well-posedness of the system \eqref{ISplacas-20}-\eqref{ISplacas-25} through the semigroup theory. We left our main results for the last two sections. In Section 3, we prove the exponential decay of the semigroup $S(t)=e^{\mathbb{B}t}$, for  $0\leq\theta\leq 1$. Section 4 deals with the lack of analyticity of the semigroup $S(t)=e^{\mathbb{B}t}$ for $\theta\in [0,1[$  and analyticity de $S(t)$ for $\theta=0$; in particular, we address the case $0\leq \theta<1$ in subsection 4.1, while the case $\theta=1$ is discussed in subsection 4.2. Finally in section 5 we show that $S(t)=e^{\mathbb{B}t}$ is of Gevrey sharp class  $s>\frac{1}{\theta}$ when the parameter $\theta$ lies in the interval $]0,1[$.

\section{Well-Posedness of the System}
We will use a semigroup approach to show the existence uniqueness of strong solutions for the abstract system  \eqref{ISplacas-20}-\eqref{ISplacas-25}. It is important recalling that $A$  defined in \eqref{Omenoslaplaciano}  is a positive self-adjoint operator with compact inverse on a complex Hilbert space $D(A^0)=L^2(\Omega)$. Therefore, the operator $A^{\theta}$ is self-adjoint positive for  all $\theta\in\R$ and the embedding
\begin{eqnarray*}
D(A^{\theta_1})\hookrightarrow D(A^{\theta_2}),
\end{eqnarray*}
is continuous for $\theta_1>\theta_2$. Here, the norm in $D(A^{\theta})$ is given by $\|u\|_{D(A^{\theta})}:=\|A^{\theta}u\|$, $u\in D(A^{\theta})$, where $\|\cdot\|_\mathcal{H}$ denotes the norm in the Hilbert space $\mathcal{H}$. Some of these spaces are: $D(A^{1/2})=H_0^1(\Omega)$, $D(A^0)=L^2(\Omega)$ and $D(A^{-1/2})=H^{-1}(\Omega)$. 

Now, we will use a semigroups approach to study the well-posedness of the system \eqref{ISplacas-20}-\eqref{ISplacas-25}.  Taking $w=u_t$, $v_t=z$ and  considering $U=(u,v,w,z)$ and $U_0=(u_0,v_0,u_1,v_1)$, the system \eqref{ISplacas-20}--\eqref{ISplacas-25}, can be written in the following abstract framework
\begin{equation}\label{Fabstrata}
    \frac{d}{dt}U(t)=\mathbb{B} U(t),\quad    U(0)=U_0,
\end{equation}
 where the operator $\mathbb{B}$ is given by
  \begin{gather} \label{operadorAgamma}
  \mathbb{B}U:=\Big(w,\ z,\ - \alpha A^2 u-\gamma A z,- \beta A v+\gamma Aw-\delta A^{\theta} z \Big),
  \end{gather}

for $U=(u,v,w,z)$. This operator will be defined in a suitable subspace of the phase space
$$              \begin{array}{ll}
               \mathcal{H}:= D(A)\times D(A^\frac{1}{2})\times D(A^0)\times D(A^0).
              \end{array}
$$
It's  a Hilbert space with the inner product
$$\langle U_1,U_2\rangle  :=\alpha  \dual{A u_1}{A u_2}+\beta \dual{A^\frac{1}{2} v_1}{A^\frac{1}{2} v_2}+ \dual{ w_{1}} {w_{2}}+\dual{ z_{1}} {z_{2}},
$$
for $U_i=(u_i, v_i, w_i, z_i)\in \mathcal{H}$,  $i=1,2$., and we endow it with the norm given by
\begin{equation}\label{NORMA}
\|U\|^2_\mathcal{H}:=\alpha\|Au\|^2+\beta\|A^\frac{1}{2}v\|^2+\|w\|^2+\|z\|^2.
\end{equation}

 In these conditions, we define the domain of $\mathbb{B}$ as
\begin{align*}
    \mathcal{D}(\mathbb{B}):= 
    \Big\{ U\in \mathcal{H}\colon (w,z)\in  D(A)\times D(A^\frac {1}{2} ),&(-\alpha A u-\gamma z, -\beta v-\delta A^{\theta-1}z)\in [D(A)]^2\Big\}.\label{dominioA}
\end{align*}

To show that the operator $\mathbb{B}$ is the generator of a $C_0$- semigroup we invoke a result from Liu-Zheng' book.

\begin{theorem}[see Theorem 1.2.4 in \cite{LiuZ}] \label{TLiuZ}
Let $ \mathbb{B}$ be a linear operator with domain $\mathcal{D}(\mathbb{B})$ dense in a Hilbert space $\mathcal{H}$. If $ \mathbb{B}$ is dissipative and $0\in\rho( \mathbb{B})$, the resolvent set of $ \mathbb{B}$, then $ \mathbb{B}$ is the generator of a $C_0$- semigroup of contractions on $\mathcal{H}$.
\end{theorem}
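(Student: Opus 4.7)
The plan is to reduce the statement to the Lumer-Phillips theorem, which characterises the generators of $C_0$-semigroups of contractions on a Hilbert space as exactly the densely defined dissipative operators $\mathbb{A}$ for which $\lambda I-\mathbb{A}$ has range equal to $\mathbb{H}$ for some (equivalently, every) $\lambda>0$. Density of $D(\mathbb{A})$ and dissipativity are assumed, so the burden reduces to producing a single $\lambda>0$ with $(\lambda I-\mathbb{A})D(\mathbb{A})=\mathbb{H}$.

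The first step is to exploit $0\in\rho(\mathbb{A})$, which supplies a bounded inverse $\mathbb{A}^{-1}\colon\mathbb{H}\to D(\mathbb{A})$ and, as a by-product, the closedness of $\mathbb{A}$. I would then show that an entire open neighbourhood of $0$ lies in $\rho(\mathbb{A})$, so in particular some positive $\lambda$ is a resolvent point. This is done by the standard Neumann-series device: on $D(\mathbb{A})$ one factors
\[
\lambda I-\mathbb{A}=-\mathbb{A}\bigl(I-\lambda \mathbb{A}^{-1}\bigr),
\]
and for $|\lambda|<\|\mathbb{A}^{-1}\|^{-1}$ the bounded operator $I-\lambda \mathbb{A}^{-1}$ is invertible on $\mathbb{H}$ through the convergent series $\sum_{n\ge 0}(\lambda \mathbb{A}^{-1})^n$.

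To extract surjectivity concretely, given $f\in\mathbb{H}$ I would set $u:=-(I-\lambda \mathbb{A}^{-1})^{-1}\mathbb{A}^{-1}f$. Rearranging the identity $(I-\lambda \mathbb{A}^{-1})u=-\mathbb{A}^{-1}f$ gives $u=\mathbb{A}^{-1}(\lambda u-f)$, and since $\mathbb{A}^{-1}$ maps $\mathbb{H}$ into $D(\mathbb{A})$ this forces $u\in D(\mathbb{A})$, with $(\lambda I-\mathbb{A})u=f$. Thus every $f\in\mathbb{H}$ has a preimage, which is precisely the range condition demanded by Lumer-Phillips.

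The only genuine subtlety is verifying that the candidate $u$ really lives in $D(\mathbb{A})$: the Neumann series a priori delivers only an element of $\mathbb{H}$, and one has to rewrite as above to read off that $u$ belongs to the image of $\mathbb{A}^{-1}$. Once this point is settled, Lumer-Phillips applies verbatim and yields both the $C_0$-semigroup generation and the contraction estimate $\|e^{\mathbb{A}t}\|\le 1$, concluding the proof.
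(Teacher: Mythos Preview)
The paper does not supply its own proof of this theorem: it is quoted from Liu--Zheng's monograph and then immediately applied to the concrete operator $\mathbb{B}$ of the coupled system. There is therefore nothing in the paper to compare your argument against.

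On its own merits your proof is correct and is the standard route to this result. The key steps---noting that $0\in\rho(\mathbb{A})$ forces $\mathbb{A}$ to be closed, running a Neumann series on $I-\lambda\mathbb{A}^{-1}$ to enlarge $\rho(\mathbb{A})$ to a full neighbourhood of the origin, and then reading off surjectivity of $\lambda I-\mathbb{A}$ for some small $\lambda>0$ so that Lumer--Phillips applies---are all sound, including the bookkeeping check that the candidate $u=-(I-\lambda\mathbb{A}^{-1})^{-1}\mathbb{A}^{-1}f$ actually lies in $D(\mathbb{A})$.
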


Let us see that the operator $\mathbb{B}$ in \eqref{operadorAgamma}  satisfies the conditions of this theorem. Clearly, we see that $\mathcal{D}(\mathbb{B})$ is dense in $\mathcal{H}$. Effecting the internal product of $\mathbb{B}U$ with $U$, we have
\begin{equation}\label{eqdissipative}
\text{Re}\dual{\mathbb{B}U}{U}=  -\delta\|A^{\theta/2} z\|^2, \quad\forall\ U\in \mathcal{D}(\mathbb{B}),
\end{equation}
that is, the operator $\mathbb{B}$ is dissipative.

To complete the conditions of the above theorem, it remains to show that $0\in\rho(\mathbb{B})$. Let $F=(f_1,f_2,f_3,f_4)\in \mathcal{H}$, let us see that the stationary problem $ \mathbb{B}U=F$ has a solution $U=(u,v,w,z)$.  From the definition of the operator  $\mathbb{B}$ given in
\eqref{operadorAgamma}, this system
can be written as                     
\begin{align}
	w=f_1,\qquad& \quad\quad  \alpha A^2 u=-[\gamma Af_2+f_3], \label{exist-10}\\
	z=f_2,\qquad &  \quad\quad \beta Av =\gamma Af_1-\delta A^\theta f_2-f_4. \label{exist-20}
\end{align}
This problem can be placed in a variational formulation: to find $t=(u,v)$ such that
\begin{eqnarray}\label{var-10}
b(t,z)=h(z):=\dual{h}{z},\quad\forall\ z=(z_1,z_2)\in D(A)\times D(A^\frac{1}{2}),
\end{eqnarray}
where\\
 $h=(-[\gamma Af_2+f_3],\gamma Af_1-\delta A^\theta f_2-f_4) \in D(A^0)\times D(A^0)$ and
\begin{eqnarray*}
b(u,v; z_1,z_2):=\alpha\dual{Au}{Az_1}+\beta\dual{A^\frac{1}{2}v}{A^\frac{1}{2}z_2}.
\end{eqnarray*}
Consequently
\begin{equation}\label{coercivity}
b(t,t)=\alpha\|Au\|^2+\beta\|A^\frac{1}{2}v\|^2.
\end{equation}

Of \eqref{coercivity}   the proof of the coercivity of this sesquilinear form $b$ in Hilbert space $D(A)\times D(A^\frac{1}{2})$ is immediate, now, applying the Lax-Milgram Theorem and taking into account the first equations of \eqref{exist-10}-\eqref{exist-20} we have a unique solution $U\in \mathcal{H}$. As this solution satisfies the system (\ref{exist-10})-(\ref{exist-20}) in a weak sense, from these equations we can conclude that $U\in \mathcal{D}(\mathbb{B})$.

Again, from \eqref{coercivity} and the second equations of \eqref{exist-10}-\eqref{exist-20},    applying Cauchy-Schwarz and Young inequalities to the second member of this inequality, for $\varepsilon>0$  there exists $K_\varepsilon >0$, such that
\begin{eqnarray*}
\alpha\|A u\|^2+ \beta\|A^\frac{1}{2} v\|^2\leq C_{\varepsilon}\|F\|^2.
\end{eqnarray*}
This inequality and the first equations of (\ref{exist-10})-(\ref{exist-20}) imply that $\|U\|_\mathcal{H}\leq C\|F\|_\mathcal{H}$, then  $0$ belongs to the resolvent set $\rho(\mathbb{B})$. Consequently, from Theorem \ref{TLiuZ}  we have  $\mathbb{B}$ as the generator of a contractions semigroup.

As $\mathbb{B}$ is the generator of a $C_0$-semigroups the solution of the abstract system (\ref{Fabstrata}) is given by $U(t)=e^{t\mathbb{B}}U_0$, $t\geq 0$. Thus, we have shown the following well-posedness theorem:
\begin{theorem}[see \cite{Pazy}] Let us take initial data $U_0$ in $\mathcal{H}$ then there exists only one solution to the problem (\ref{Fabstrata}) satisfying
$$
U\in C([0,\infty[;\mathcal{H}).
$$
Moreover,  if $U_0\in D(\mathbb{B})$ then the solution satisfies
$$
U\in C([0,\infty[;\mathcal{D}(\mathbb{B}))\cap C^1([0,\infty[;\mathcal{H}).
$$
\end{theorem}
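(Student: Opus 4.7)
The plan is to deduce the theorem directly from the semigroup generation result already established in the preceding discussion. Indeed, the three hypotheses of Theorem \ref{TLiuZ} have been verified: density of $D(\mathbb{B})$ in $\mathbb{H}$ is immediate from the definition of the domain; dissipativity follows from the identity \eqref{eqdissipative}, which gives $\operatorname{Re}\langle\mathbb{B}U,U\rangle=-\delta\|A^{\theta/2}z\|^2\leq 0$; and the Lax--Milgram argument applied to the stationary system \eqref{exist-10}--\eqref{exist-20} shows that $0\in\rho(\mathbb{B})$ together with the bound $\|U\|\leq C\|F\|$. Hence $\mathbb{B}$ generates a $C_{0}$-semigroup of contractions $S(t)=e^{t\mathbb{B}}$ on $\mathbb{H}$.

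With this in hand, the proof of well-posedness becomes a direct application of the standard generation theory from Pazy's book. First I would set $U(t):=S(t)U_0$ for $U_0\in\mathbb{H}$; the strong continuity of the semigroup immediately yields the mild solution
\begin{equation*}
U\in C([0,\infty);\mathbb{H}),
\end{equation*}
and contractivity together with linearity of $S(t)$ gives uniqueness: if $\widetilde U$ were another mild solution with the same data, $S(t-s)\widetilde U(s)$ would be independent of $s\in[0,t]$, forcing $\widetilde U(t)=S(t)U_0$.

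Next, for $U_0\in D(\mathbb{B})$, I would invoke the standard invariance property $S(t)D(\mathbb{B})\subset D(\mathbb{B})$ together with the commutation relation $\mathbb{B}S(t)U_0=S(t)\mathbb{B}U_0$. These give continuity of $\mathbb{B}U(\cdot)$ and differentiability of $U(\cdot)$, from which
\begin{equation*}
U\in C([0,\infty);D(\mathbb{B}))\cap C^{1}(0,T;\mathbb{H})
\end{equation*}
follows, with $\tfrac{d}{dt}U(t)=\mathbb{B}U(t)$ satisfied in the classical sense, recovering \eqref{Fabstrata}.

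There is no real obstacle at this stage: all the work is concentrated in verifying the hypotheses of Theorem \ref{TLiuZ}, and in particular in constructing the inverse of $\mathbb{B}$ via the variational formulation \eqref{var-10}, which was the genuinely non-trivial step. Once the generation theorem is established, the well-posedness statement is merely the translation of Pazy's abstract result into the concrete setting \eqref{Fabstrata}, so the proof reduces to invoking the semigroup property and its consequences for data in $\mathbb{H}$ and in $D(\mathbb{B})$, respectively.
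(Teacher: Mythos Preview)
Your proposal is correct and matches the paper's approach exactly: the paper does not give a separate proof of this theorem but simply notes, after verifying the hypotheses of Theorem~\ref{TLiuZ} (density, dissipativity via \eqref{eqdissipative}, and $0\in\rho(\mathbb{B})$ via the Lax--Milgram argument), that the solution is $U(t)=e^{t\mathbb{B}}U_0$ and then cites Pazy. Your write-up is in fact more detailed than the paper's, which treats the well-posedness as an immediate corollary of generation.
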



\section{Stability Results}
In this section, we will study the asymptotic behavior of the semigroup of the system \eqref{ISplacas-20}-\eqref{ISplacas-25}. First, we will use the following spectral characterization of exponential stability of semigroups due to Gearhart\cite{Gearhart}(Theorem 1.3.2  book of Liu-Zheng ), and to study analyticity we will use a characterization of the book of Liu-Zheng (Theorem 1.3.3).
\begin{theorem}[see \cite{LiuZ}]\label{LiuZExponential}
Let $S(t)=e^{\mathbb{B}t}$ be  a  $C_0$-semigroup of contractions on  a Hilbert space $ \mathcal{H}$. Then $S(t)$ is exponentially stable if and only if  
	\begin{equation}\label{EImaginario}
\rho(\mathbb{B})\supseteq\{ i\lambda/ \lambda\in \R \} 	\equiv i\R
\end{equation}
and
\begin{equation}\label{Exponential}
 \limsup\limits_{|\lambda|\to
   \infty}   \|(i\lambda I-\mathbb{B})^{-1}\|_{\mathcal{L}( \mathcal{H})}<\infty
\end{equation}
holds.
\end{theorem}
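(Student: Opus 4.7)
The plan is to establish both directions of this Gearhart--Pr\"uss--Huang spectral characterization. The necessity direction is routine: if $\|S(t)\| \leq M e^{-\omega t}$ for some $\omega > 0$, then the Laplace representation
$$
(\lambda I - \mathbb{B})^{-1} x \;=\; \int_0^\infty e^{-\lambda t} S(t) x \, dt
$$
converges absolutely for every $\lambda$ with $\operatorname{Re}\lambda > -\omega$. This immediately furnishes the inclusion $i\mathbb{R} \subset \rho(\mathbb{B})$ and the uniform bound $\|(i\lambda - \mathbb{B})^{-1}\| \leq M/\omega$ for every $\lambda\in\mathbb{R}$, which is exactly (\ref{EImaginario})--(\ref{Exponential}).

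The sufficiency direction is the substantive one, and the key is that we are on a Hilbert space, so Plancherel's identity is available. Set $M := \sup_{\lambda \in \mathbb{R}} \|(i\lambda - \mathbb{B})^{-1}\|$. Using the resolvent identity and a Neumann series one first extends the uniform bound from the axis $i\mathbb{R}$ to a vertical strip $\{\lambda : |\operatorname{Re}\lambda| < \alpha\}$ for some $\alpha = \alpha(M) > 0$. Next, fix $U_0 \in D(\mathbb{B})$ and for small $\varepsilon>0$ consider $f_\varepsilon(t) := e^{-\varepsilon t} S(t) U_0$ extended by zero for $t<0$; this lies in $L^2(\mathbb{R}; \mathbb{H})$ with Fourier transform $\widehat{f_\varepsilon}(\lambda) = \bigl((\varepsilon + i\lambda)I - \mathbb{B}\bigr)^{-1} U_0$. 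Parseval's identity then yields
$$
\int_0^\infty e^{-2\varepsilon t}\|S(t)U_0\|^2\,dt \;=\; \frac{1}{2\pi}\int_{-\infty}^{\infty}\!\bigl\|\bigl((\varepsilon+i\lambda)I-\mathbb{B}\bigr)^{-1}U_0\bigr\|^2\,d\lambda \;\leq\; C\|U_0\|^2,
$$
uniformly in $\varepsilon\in(0,\alpha/2)$. Letting $\varepsilon\downarrow 0$ by monotone convergence gives $\int_0^\infty \|S(t) U_0\|^2\,dt \leq C\|U_0\|^2$ for every $U_0\in\mathbb{H}$ (by density, since $D(\mathbb{B})$ is dense and $S(t)$ is contractive). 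A standard application of Datko's theorem---$L^2$-integrability of the orbits of a bounded $C_0$-semigroup implies uniform exponential decay---then closes the argument.

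The main obstacle is the sufficiency direction, and within it the passage $\varepsilon \downarrow 0$ together with the rigorous justification of the vector-valued Fourier-transform manipulations. The Hilbert space hypothesis is essential: Plancherel has no isometric analogue on general Banach spaces, and indeed the theorem is known to fail outside the Hilbert setting (counterexamples exist where the resolvent is bounded on $i\mathbb{R}$ yet no exponential decay holds), which is precisely why Theorem \ref{LiuZExponential} is formulated only for Hilbert spaces and will only be invoked for our operator $\mathbb{B}$ acting on $\mathbb{H}$.
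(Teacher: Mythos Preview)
The paper does not prove this theorem at all: it is quoted verbatim from Liu--Zheng \cite{LiuZ} (and ultimately Gearhart \cite{Gearhart}, Huang, Pr\"uss) as a black-box tool, with no argument given.  So there is no ``paper's own proof'' to compare against; the paper merely \emph{applies} conditions \eqref{EImaginario}--\eqref{Exponential} to the concrete operator $\mathbb{B}$ in Section~3.1.

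That said, your sketch is the standard Gearhart--Pr\"uss--Huang argument and is essentially correct.  One point deserves more care: the inequality
\[
\frac{1}{2\pi}\int_{-\infty}^{\infty}\bigl\|\bigl((\varepsilon+i\lambda)I-\mathbb{B}\bigr)^{-1}U_0\bigr\|^2\,d\lambda \;\leq\; C\|U_0\|^2
\]
does \emph{not} follow from the uniform resolvent bound on the strip alone (a bounded integrand on $\mathbb{R}$ need not be integrable).  The usual fix is to compare, via the resolvent identity, the line $\operatorname{Re}\mu=\varepsilon$ with a fixed reference line such as $\operatorname{Re}\mu=1$: writing $R(\varepsilon+i\lambda)=\bigl[I+(1-\varepsilon)R(\varepsilon+i\lambda)\bigr]R(1+i\lambda)$ and using the strip bound $\|R(\varepsilon+i\lambda)\|\leq M'$ gives $\|R(\varepsilon+i\lambda)U_0\|\leq C\|R(1+i\lambda)U_0\|$, after which Plancherel on the line $\operatorname{Re}\mu=1$ (where the Hille--Yosida estimate already yields $\int_{\mathbb R}\|R(1+i\lambda)U_0\|^2\,d\lambda\le\pi\|U_0\|^2$) closes the bound uniformly in $\varepsilon$.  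With that link supplied, your monotone-convergence and Datko steps are fine.
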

\begin{theorem}[see \cite{LiuZ}]\label{LiuZAnaliticity}
	Let $S(t)=e^{\mathbb{B}t}$ be $C_0$-semigroups of contractions  on a Hilbert space $ \mathcal{H}$. Suppose that
	\begin{equation*}
	\rho(\mathbb{B})\supseteq\{ i\lambda/ \lambda\in \R \} 	\equiv i\R
	\end{equation*}
	 Then $S(t)$ is analytic if and only if
	\begin{equation}\label{Analiticity}
	 \limsup\limits_{|\lambda|\to
		\infty}
	\|\lambda(i\lambda I-\mathbb{B})^{-1}\|_{\mathcal{L}( \mathcal{H})}<\infty
	\end{equation}
	holds.
\end{theorem}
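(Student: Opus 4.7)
The plan is to reduce the statement to the well-known abstract characterization: a bounded $C_0$-semigroup $S(t)=e^{\mathbb{B}t}$ on a Hilbert space is analytic if and only if $\rho(\mathbb{B})$ contains a sector $\Sigma_\varphi:=\{\mu\in\mathbb{C}\setminus\{0\}:|\arg\mu|<\pi/2+\varphi\}$ for some $\varphi\in(0,\pi/2)$ and $\sup_{\mu\in\Sigma_\varphi}\|\mu(\mu I-\mathbb{B})^{-1}\|<\infty$. Granted this criterion, the theorem reduces to promoting resolvent information on the imaginary axis to resolvent information on a full sector containing it.

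The forward implication is essentially immediate: if $S(t)$ is analytic, the abstract characterization yields $\|\mu(\mu I-\mathbb{B})^{-1}\|\leq M$ throughout $\Sigma_\varphi$, and specializing to $\mu=i\lambda$ with $\lambda\in\R$ gives \eqref{Analiticity}.

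The backward implication is the substantive part. I would assume $\|\lambda(i\lambda I-\mathbb{B})^{-1}\|\leq M$ for $|\lambda|\geq\lambda_0$ and extend the resolvent off the imaginary axis via the Neumann series
\[
(i\lambda+\zeta-\mathbb{B})^{-1}=(i\lambda-\mathbb{B})^{-1}\sum_{k=0}^{\infty}\bigl(-\zeta\,(i\lambda-\mathbb{B})^{-1}\bigr)^{k},
\]
which converges in operator norm whenever $|\zeta|\,\|(i\lambda-\mathbb{B})^{-1}\|<1$. The hypothesis gives $\|(i\lambda-\mathbb{B})^{-1}\|\leq M/|\lambda|$, so convergence holds for $|\zeta|<|\lambda|/(2M)$, and the norm of the extended resolvent at $\mu:=i\lambda+\zeta$ stays bounded by $2M/|\lambda|$. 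As $\lambda$ varies over $|\lambda|\geq\lambda_0$ the admissible $\mu$ sweep out two open sectors of half-opening $\arctan(1/(2M))$ around the positive and negative imaginary half-axes; combined with the inclusion $\{\operatorname{Re}\mu>0\}\subseteq\rho(\mathbb{B})$ with bound $\|(\mu-\mathbb{B})^{-1}\|\leq 1/\operatorname{Re}\mu$ (valid for any contraction semigroup) and a compactness argument near the compact arc $|\lambda|\leq\lambda_0$ of the imaginary axis, one obtains a full sector $\Sigma_\varphi$ on which $\|\mu(\mu I-\mathbb{B})^{-1}\|$ is uniformly bounded. The abstract criterion then yields analyticity.

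The main technical obstacle is verifying that the pointwise Neumann extensions assemble into a genuinely \emph{conic} region, i.e., that the opening angle of the extended sector does not degenerate to zero as $|\lambda|\to\infty$. The key point is that the uniform bound $\|\lambda(i\lambda I-\mathbb{B})^{-1}\|\leq M$ in $\lambda$ forces the radius of convergence $|\lambda|/(2M)$ to scale \emph{linearly} in $|\lambda|$, so the half-opening $\arctan(1/(2M))$ is independent of $\lambda$. Without this linear scaling, one would obtain only a tubular neighborhood of the imaginary axis of fixed width, which is insufficient to recover analyticity by the abstract criterion.
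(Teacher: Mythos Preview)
The paper does not prove this theorem at all: it is quoted verbatim from Liu--Zheng \cite{LiuZ} (Theorem~1.3.3) and used as a black box, so there is no in-paper argument to compare against. Your sketch is the standard textbook proof of this abstract characterization---Neumann-series extension off the imaginary axis, with the crucial observation that the uniform bound $\|\lambda(i\lambda I-\mathbb{B})^{-1}\|\leq M$ forces the convergence radius to scale linearly in $|\lambda|$, yielding a sector of fixed half-angle $\arctan(1/(2M))$ rather than a strip---and is essentially what one finds in \cite{LiuZ} or Engel--Nagel. The forward implication and the handling of the bounded segment $|\lambda|\leq\lambda_0$ via continuity of the resolvent on the open set $\rho(\mathbb{B})$ are routine, and your treatment of them is adequate for a sketch.
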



In what follows: $C$, $C_\delta$, $C_{\delta_1}$ and $K_\varepsilon$ will denote positive constants that assume different values in different places, and the coupling coefficient $\gamma$  will be assumed positive (the results remain valid when this coefficient $\gamma$ is negative).

 First, note that if $\lambda\in\R$ and $F=(f_1,f_2,f_3,f_4)\in \mathcal{H}$ then the solution $U=(u,v,w,z)\in\hbox{D}(\mathbb{B
 })$ of the stationary system $(i\lambda I- \mathbb{B
 })U=F$ can be written in the form
\begin{eqnarray}
i\lambda u-w &=& f_1,\label{esp-10}\\
i\lambda v-z &=& f_2,\label{esp-20}\\
i\lambda  w+ \alpha A^2 u+\gamma Az &=&f_3,\label{esp-30}\\
i\lambda  z+\beta A v-\gamma Aw+\delta A^{\theta} z&=& f_4.\label{esp-40}
\end{eqnarray}
We have 
\begin{eqnarray}\label{dis-10}
\delta\|A^{\frac{\theta}{2}}z\|^2=\text{Re}\dual{(i\lambda -\mathbb{B})U}{U}=\text{Re}\dual{F}{U}\leq \|F\|_\mathcal{H}\|U\|_\mathcal{H}.
\end{eqnarray}
From equations (\ref{esp-20}) and \eqref{dis-10}, we have
\begin{eqnarray}\label{dis-10A}
|\lambda|^2\|A^{\frac{\theta}{2}}v\|^2 &\leq& C\{\|F\|_\mathcal{H}\|U\|_\mathcal{H}+\|F\|^2_\mathcal{H}\}.
\end{eqnarray}
As $\frac{\theta-2}{2}\leq  0\leq\frac{\theta}{2}$, taking into account the continuous embedding $D(A^{\theta_2})\hookrightarrow D(A^{\theta_1})$, $\theta_2>\theta_1$ and \eqref{dis-10}, we obtain
\begin{eqnarray}
\label{dis-10B}
  \|A^\frac{\theta-2}{2}z\|^2 &\leq&C\{\|F\|_\mathcal{H}\|U\|_\mathcal{H}+\|F\|^2_\mathcal{H}\}. \\
\label{dis-10C}
 \|z\|^2 &\leq&C\{\|F\|_\mathcal{H}\|U\|_\mathcal{H}+\|F\|^2_\mathcal{H}\}.
\end{eqnarray}

\subsection{Exponential Decay of $S(t)$ for $0\leq\theta\leq 1$}
In this subsection, we show the exponential decay using Theorem \ref{LiuZExponential},  to demonstrate condition  \eqref{Exponential}.  Just demonstrate
\begin{equation}\label{EquivExponential}
\|U\|_\mathcal{H}^2\leq C\|F\|_\mathcal{H}\|F\|_\mathcal{H}\quad\text{for}\quad 0\leq \theta\leq 1.
\end{equation}

Now, notice that:
\begin{eqnarray*}
  \langle A^2v, A^\sigma w\rangle &=& \langle A^2v, A^\sigma(i\lambda u-f_1) \rangle=-i\lambda \langle A^\sigma v, A^2 u\rangle-\langle A^{1+\sigma}v, A f_1\rangle\\
  \langle A^2u, A^\sigma z\rangle &=& \langle A^2u, A^\sigma(i\lambda v-f_2) \rangle=-i\lambda \langle A^2u, A^\sigma v\rangle-\langle A^{1+\sigma}u, A f_2\rangle.
\end{eqnarray*}
Summing up, both equations and taking the real part, we have
\begin{equation}
 \label{eq00} 
 \text{Re}\{\langle A^2v, A^\sigma w\rangle+\langle A^2u, A^\sigma z\rangle\} 
   =-\text{Re}\{ \langle A^{1+\sigma}u, Af_2\rangle+\langle A^{1+\sigma}v, Af_1\rangle\}
\end{equation}
To get our first results, we should first demonstrate some lemmas.
\begin{lemma}\label{Lemma05}
Let $0\leq\theta\leq 1$ and  $\sigma\leq -1$. The solutions of equations \eqref{esp-10}-\eqref{esp-40},  satisfy the following equality
\begin{eqnarray*}
\dfrac{\gamma\alpha}{\beta}\|A^\frac{\sigma+2}{2}w\|^2 &=&\gamma\|A^{\frac{\sigma+1}{2}}z\|^2-\alpha
\text{Re}\{\langle A^{1+\sigma}u, Af_2\rangle+\langle A^{1+\sigma}v, Af_1\rangle \} \\
& &
+\dfrac{\delta\alpha}{\beta}
\text{Re}\dual{A^\frac{2\theta+\sigma}{2}z}{A^\frac{\sigma+2}{2}w}
-\dfrac{\alpha}{\beta}\text{Re}\langle f_4,A^{\sigma +1}w\rangle-\text{Re}\langle f_3,A^\sigma z\rangle\\
& &-\dfrac{\lambda\alpha}{\beta} \text{Im}\langle A^\frac{\sigma}{2} z,A^\frac{\sigma+2}{2}w\rangle-\lambda\text{Im}\langle A^\frac{\sigma+2}{2}w,A^\frac{\sigma-2}{2} z\rangle.
\end{eqnarray*}
\end{lemma}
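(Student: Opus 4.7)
The plan is to derive the identity algebraically by producing a second expression for the combination $\alpha\langle A^2v, A^\sigma w\rangle + \alpha\langle A^2 u, A^\sigma z\rangle$ that already appears on the left of \eqref{eq00}, then equating the two. Equations \eqref{esp-30} and \eqref{esp-40} let one substitute $\alpha A^2 u$ and $\beta A^2 v$ by lower-order expressions, which is exactly how the target norm $\|A^{(\sigma+2)/2}w\|^2$ enters.

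First I apply $A$ to \eqref{esp-40} to obtain
\[ \beta A^2 v = A f_4 - i\lambda A z + \gamma A^2 w - \delta A^{\theta+1} z, \]
and pair with $A^\sigma w$; by self-adjointness of $A$, the cross-coupling term yields $\gamma\|A^{(\sigma+2)/2} w\|^2$. Next I pair \eqref{esp-30} directly with $A^\sigma z$; the cross-coupling term now yields $-\gamma\|A^{(\sigma+1)/2} z\|^2$. Multiplying the first identity by $\alpha/\beta$ and adding to the second produces an explicit formula for $\alpha[\langle A^2 v, A^\sigma w\rangle + \langle A^2 u, A^\sigma z\rangle]$ in terms of $\|A^{(\sigma+2)/2} w\|^2$, $\|A^{(\sigma+1)/2} z\|^2$, and various $\lambda$- and data-type pairings.

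Taking real parts and invoking \eqref{eq00}, the left-hand side of the sum collapses to $-\alpha\,\mathrm{Re}\{\langle A^{1+\sigma}u, Af_2\rangle + \langle A^{1+\sigma}v, Af_1\rangle\}$. Solving for $(\gamma\alpha/\beta)\|A^{(\sigma+2)/2} w\|^2$ already produces a formula of the correct shape. What remains is cosmetic: I use $A^{*}=A$ to symmetrize every remaining pairing,
\[ \langle Az, A^\sigma w\rangle = \langle A^{\sigma/2} z, A^{(\sigma+2)/2} w\rangle, \quad \langle A^{\theta+1} z, A^\sigma w\rangle = \langle A^{(2\theta+\sigma)/2} z, A^{(\sigma+2)/2} w\rangle, \]
\[ \langle A f_4, A^\sigma w\rangle = \langle f_4, A^{\sigma+1} w\rangle, \quad \langle w, A^\sigma z\rangle = \langle A^{(\sigma+2)/2} w, A^{(\sigma-2)/2} z\rangle, \]
and convert the two $i\lambda$-contributions into $\mathrm{Im}$-terms via $\mathrm{Re}(iz) = -\mathrm{Im}(z)$. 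Reading off the signs carefully (under the paper's sesquilinearity convention) produces exactly the two imaginary parts $\frac{\lambda\alpha}{\beta}\mathrm{Im}\langle A^{\sigma/2}z, A^{(\sigma+2)/2}w\rangle$ and $\lambda\,\mathrm{Im}\langle A^{(\sigma+2)/2}w, A^{(\sigma-2)/2}z\rangle$ stated in the lemma.

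The only real obstacle is bookkeeping of exponents and signs. With $\sigma \le -1$ one must verify that every fractional power of $A$ appearing is legitimately defined and each pairing finite, which is immediate: $A^{-1}$ is compact and positive, so $A^{\sigma/2}$ and $A^{(\sigma-2)/2}$ are bounded on $\mathbb{X}$, and $U\in D(\mathbb{B})$, $F\in\mathbb{H}$ provide more than enough regularity. Note that no dissipation estimate such as \eqref{dis-10} is invoked: this is a purely algebraic identity on the resolvent system, valid for every $\lambda\in\mathbb{R}$ and every $\theta\in[0,1]$.
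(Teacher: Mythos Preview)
Your proposal is correct and follows essentially the same route as the paper. The paper pairs \eqref{esp-30} with $A^\sigma z$ and \eqref{esp-40} with $\dfrac{\alpha}{\beta}A^{\sigma+1}w$, then subtracts, takes real parts, and invokes \eqref{eq00}; your step ``apply $A$ to \eqref{esp-40} and pair with $A^\sigma w$, then scale by $\alpha/\beta$'' is identical to the paper's pairing by self-adjointness, and your ``add'' versus the paper's ``subtract'' is just a matter of which side each term is written on.
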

\begin{proof}
Applying the product duality to equation \eqref{esp-30} with $A^{\sigma}z$ and recalling that the operator $A$ is self-adjoint,  we have
\begin{eqnarray*}\nonumber
\gamma\|A^{\frac{\sigma+1}{2}}z\|^2 &=&-\alpha \langle A^{2}u,A^\sigma z\rangle -i\lambda \langle w,A^\sigma z\rangle+\langle  f_3,A^\sigma z\rangle.
\end{eqnarray*}
Similarly, applying the product duality to equation (\ref{esp-40}) with $\dfrac{\alpha}{\beta}A^{\sigma+1}w$ and using the equation \eqref{esp-10} we obtain
\begin{eqnarray*}\nonumber
\dfrac{\gamma\alpha}{\beta}\|A^{\frac{\sigma+2}{2}}w\|^2 & =  & \alpha \langle A^2v,A^\sigma w\rangle+\dfrac{i\lambda\alpha}{\beta}\langle A^\frac{\sigma}{2}z,A^\frac{\sigma+2}{2} w\rangle
 +\dfrac{\delta\alpha}{\beta}\langle A^\frac{2\theta+\sigma}{2} z,A^\frac{\sigma+2}{2} w\rangle\\
 \nonumber
& &  -\dfrac{\alpha}{\beta}\langle f_4,A^{\sigma+1} w\rangle.
\end{eqnarray*}
Now, to get the conclusion of this Lemma it is sufficient to perform the subtraction of these last two equations, take the real part and use the identity  \eqref{eq00}. 
\end{proof}
Taking  $\sigma=-2$, in Lemma \ref{Lemma05}, we have

\begin{eqnarray}\nonumber
\dfrac{\gamma\alpha}{\beta}\|w\|^2 &=&\gamma\|A^{\frac{-1}{2}}z\|^2-\alpha
\text{Re}\{\langle A^{-1}u, Af_2\rangle+\langle A^{-1}v, Af_1\rangle \} \\
\label{Exp1000}
& &
+\dfrac{\delta\alpha}{\beta}
\text{Re}\dual{A^{\theta-1}z}{w}
-\dfrac{\alpha}{\beta}\text{Re}\langle f_4,A^{-1}w\rangle-\text{Re}\langle f_3,A^{-2}z\rangle\\
\nonumber
& &-\dfrac{\alpha}{\beta} \text{Im}\langle  z,A^{-1}\lambda w\rangle-\lambda\text{Im}\langle w,A^{-2} z\rangle,
\end{eqnarray}
From equation \eqref{esp-30}, we have $ A^{-1}\lambda w=i\alpha Au+i\gamma z-iA^{-1}f_3$, therefore
\begin{eqnarray}\nonumber
\hspace*{-0.5cm}-\dfrac{\alpha}{\beta} \text{Im}\langle  z,A^{-1}\lambda w\rangle & = &
-\dfrac{\alpha}{\beta} \text{Im} \langle z,i\alpha Au+i\gamma z-iA^{-1}f_3\rangle\\
\nonumber
& =& \dfrac{\alpha^2}{\beta}\text{Re}\langle A^\frac{\theta}{2}z, A^\frac{2-\theta}{2}u\rangle +\dfrac{\alpha\gamma}{\beta} \|z\|^2-\dfrac{\alpha}{\beta}\text{Re}\langle z,A^{-1}f_3\rangle\\
\label{Exp1001}
&\leq&  \dfrac{\alpha^2}{\beta}\text{Re}\langle A^\frac{\theta}{2}z, A^\frac{2-\theta}{2}u\rangle -\dfrac{\alpha}{\beta}\text{Re}\langle z,A^{-1}f_3\rangle +C\|F\|_\mathcal{H}\|F\|_\mathcal{H}.
\end{eqnarray}

Substituting  \eqref{Exp1001} into \eqref{Exp1000} and from $-\frac{1}{2}<\frac{\theta}{2}$, using \eqref{dis-10}, we have

\begin{eqnarray}
\nonumber
\dfrac{\gamma\alpha}{\beta}\|w\|^2 &\leq&C\|F\|_\mathcal{H}\|U\|_\mathcal{H}-\alpha
\text{Re}\{\langle A^{-1}u, Af_2\rangle+\langle A^{-1}v, Af_1\rangle \} \\
\label{Exp1002}
& &
+\dfrac{\delta\alpha}{\beta}
\text{Re}\dual{A^{\theta-1}z}{w}
-\dfrac{\alpha}{\beta}\text{Re}\langle f_4,A^{-1}w\rangle-\text{Re}\langle f_3,A^{-2}z\rangle\\
\nonumber
& & +\dfrac{\alpha^2}{\beta}\text{Re}\langle A^\frac{\theta}{2}z, A^\frac{2-\theta}{2}u\rangle -\dfrac{\alpha}{\beta}\text{Re}\langle z,A^{-1}f_3\rangle-\text{Im}\langle A^{-2}\lambda w, z\rangle.
\end{eqnarray}

On the other hand of the equation  \eqref{esp-30}, we have $A^{-2}\lambda w=i\alpha u+i\gamma A^{-1}z-iA^{-2}f_3$, therefore
\begin{eqnarray}\label{Exp1003}
 \text{Im}\langle A^{-2}\lambda w, z \rangle & = &
 \text{Im} \langle i\alpha u+i\gamma A^{-1}z-iA^{-2}f_3, z\rangle\\
\nonumber
& =& \alpha\text{Re}\langle A^\frac{-\theta}{2}u, A^\frac{\theta}{2}z\rangle +\gamma\|A^\frac{-1}{2}z\|^2-\text{Re}\langle A^{-2}f_3,z\rangle.
\end{eqnarray}
Now, substituting  \eqref{Exp1003} into \eqref{Exp1002}, we have

\begin{eqnarray}
\nonumber
\dfrac{\gamma\alpha}{\beta}\|w\|^2 &\leq&C\|F\|_\mathcal{H}\|U\|_\mathcal{H}-\alpha
\text{Re}\{\langle A^{-1}u, Af_2\rangle+\langle A^{-1}v, Af_1\rangle \} \\
\label{Exp1004}
& &
+\dfrac{\delta\alpha}{\beta}
\text{Re}\dual{A^{\theta-1}z}{w}
-\dfrac{\alpha}{\beta}\text{Re}\langle f_4,A^{-1}w\rangle-\text{Re}\langle f_3,A^{-2}z\rangle\\
\nonumber
& & +\dfrac{\alpha^2}{\beta}\text{Re}\langle A^\frac{\theta}{2}z, A^\frac{2-\theta}{2}u\rangle -\dfrac{\alpha}{\beta}\text{Re}\langle z,A^{-1}f_3\rangle
-\alpha\text{Re}\langle A^\frac{-\theta}{2}u, A^\frac{\theta}{2}z\rangle \\
\nonumber
& & +\text{Re}\langle A^{-2}f_3,z\rangle.
\end{eqnarray}

Applying Cauchy-Schwarz and Young inequalities, taking into account the continuous embedding $D(A^{\theta_2}) \hookrightarrow D(A^{\theta_1}),\;\theta_2>\theta_1$, $\theta-1\leq \dfrac{\theta}{2}$ and using estimative \eqref{dis-10} we have,   for  $\varepsilon>0$, there exist $k_\varepsilon>0$, such that

\begin{equation}
\label{Exp1005}
\|w\|^2 \leq C\{ \|F\|_\mathcal{H} \|U\|_\mathcal{H} \} +\varepsilon\|w\|^2+\varepsilon \|A^\frac{2-\theta}{2}u\|^2+\varepsilon\|A^\frac{-\theta}{2}u\|^2.
\end{equation}

On the other hand, by effecting the product duality of \eqref{esp-30} by $A^{-\theta}u$, we have
\begin{eqnarray*}
\alpha\|A^\frac{2-\theta}{2}u\|^2 & = & \langle w, A^{-\theta}(i\lambda u)\rangle-
\gamma\langle A^\frac{-\theta}{2}z,A^\frac{2-\theta}{2}u\rangle+\langle f_3,A^{-\theta}u\rangle\\
& = & \|A^\frac{-\theta}{2}w\|^2+\langle w, A^{-\theta}f_1 \rangle-
\gamma\langle A^\frac{-\theta}{2}z,A^\frac{2-\theta}{2}u\rangle+\langle f_3,A^{-\theta}u\rangle.
\end{eqnarray*}

Taking the real part and applying Cauchy-Schwarz and Young inequalities, taking into account the continuous embedding, $-\frac{\theta}{2}\leq\frac{\theta}{2}$, we have
\begin{equation}\label{Exp1006}
	\|A^\frac{2-\theta}{2}u\|^2 
	 \leq  C\{  \|F\|_\mathcal{H}\|U\|_\mathcal{H} \}+\|A^\frac{-\theta}{2}w\|^2.
\end{equation}
 Substituting  \eqref{Exp1006} into \eqref{Exp1005} and  taking into account the continuous embedding, $-\frac{\theta}{2}\leq\frac{2-\theta}{2}$ and $\frac{-\theta}{2}\leq 0$, we have
\begin{equation}\label{Exp1007}
\|w\|^2 \leq C\{ \|F\|_\mathcal{H} \|U\|_\mathcal{H}  \} \qquad \text{for}\qquad 0\leq\theta\leq 1.
\end{equation}


Taking the duality product between  equation (\ref{esp-30}) and $u$ and using the equation (\ref{esp-10}),  we obtain
\begin{eqnarray}\label{Exp1010}
\alpha\|Au\|^2 &= &-\gamma\dual{z}{Au}+\|w\|^2
+\langle w, f_1\rangle +\langle f_3, u\rangle.
\end{eqnarray}
Applying Cauchy-Schwarz and Young inequalities, taking into account the continuous embedding $D(A^{\theta_2}) \hookrightarrow D(A^{\theta_1}),\;\theta_2>\theta_1$,  $\frac{-1}{2}<\frac{\theta}{2}$, $0\leq \frac{\theta}{2}$ and using estimates \eqref{dis-10}  and \eqref{Exp1007} we have,   for  $\varepsilon>0$,  there exist $k_\varepsilon>0$,  such that
\begin{equation}\label{Exp1011}
\alpha\|Au\|^2\leq C\{   \|F\|_\mathcal{H}\|U\|_\mathcal{H} \}\qquad \text{for}\qquad 0\leq\theta\leq 1.	
\end{equation}

Similarly,  applying the duality product to equation (\ref{esp-40}) with $v$ and using the equation (\ref{esp-20}), we have
\begin{eqnarray}
\label{Exp1012}
\beta\|A^\frac{1}{2}v\|^2 &= &\gamma\dual{Aw}{v}+\|z\|^2
-\delta \dual{A^\frac{\theta}{2}z}{A^\frac{\theta}{2}v}+\langle z, f_2\rangle+\langle f_4, v\rangle.
\end{eqnarray}
Subtracting  \eqref{Exp1012} from \eqref{Exp1010} and taking the real part,  we have  
\begin{eqnarray*}
\beta\|A^\frac{1}{2}v\|^2 & = &\alpha \|Au\|^2 + \gamma\text{Re}\{\langle i\lambda Av-Af_2, u\rangle+\langle i\lambda Au- Af_1,v\rangle \}-\|w\|^2\\
& &-\delta\text{Re} \dual{A^\frac{\theta}{2}z}{A^\frac{\theta}{2}v}+\text{Re}\langle z, f_2\rangle+\text{Re}\langle f_4, v\rangle-\text{Re}\langle w,f_1\rangle-\text{Re}\langle f_3,u\rangle\\
&\leq&\alpha \|Au\|^2 + \gamma\lambda\text{Im}
\{\langle Av, u\rangle+\langle u, Av\rangle \}-\gamma\text{Re}\{ \langle f_2,A u\rangle+\langle Af_1,v\rangle   \}\\
& &-\delta\text{Re} \dual{A^\frac{\theta}{2}z}{A^\frac{\theta}{2}v}+\text{Re}\langle z, f_2\rangle+\text{Re}\langle f_4, v\rangle-\text{Re}\langle w,f_1\rangle-\text{Re}\langle f_3,u\rangle
\end{eqnarray*}
Now, as $\text{Im}
\{\langle Av, u\rangle+\langle u, Av\rangle \}=0$ and $\frac{\theta}{2}\leq\frac{1}{2}$,     using the estimative \eqref{Exp1011} and applying  Cauchy-Schwarz inequality and Young inequality and continuous embedding  we have the inequality
\begin{equation}\label{Exp1016}
\beta\|A^\frac{1}{2}v\|^2 \leq C\{ \|F\|_\mathcal{H}\|U\|_\mathcal{H}\}\qquad\hbox{for}\qquad 0\leq\theta\leq 1.
\end{equation}

Therefore, estimates \eqref{dis-10C}, \eqref{Exp1007}, \eqref{Exp1011}  and  \eqref{Exp1016}, condition \eqref{Exponential} the Theorem \ref{LiuZExponential} is verified for  $0\leq \theta\leq 1$.


Now let's show condition \eqref{EImaginario} the Theorem \ref{LiuZExponential}.   It'is  prove that $i\R\subset\rho(\mathbb{B})$ by contradiction, then we suppose that $i\R\not\subset \rho(\mathbb{B})$. As $0\in\rho(\mathbb{B})$ and  $\rho(\mathbb{B})$ is open, we consider the highest positive number $\lambda_0$ such that the interval  $]-i\lambda_0,i\lambda_0[\subset\rho(\mathbb{B})$ then $i\lambda_0$ or $-i\lambda_0$ is an element of the spectrum $\sigma(\mathbb{B})$. We Suppose $i\lambda_0\in \sigma(\mathbb{B})$ (if $-i\lambda_0\in \sigma(\mathbb{B})$ the proceeding is similar). Then, for $0<\delta<\lambda_0$ there exist a sequence of real numbers $(\lambda_n)$, with $\delta\leq\lambda_n<\lambda_0$, $\lambda_n\to \lambda_0$, and a vector sequence  $U_n=(u_n,v_n,w_n,z_n)\in \mathcal{D}(\mathbb{B})$ with  unitary norms, such that
\begin{eqnarray*}
\|(i\lambda_n-\mathbb{B}) U_n\|_\mathcal{H}=\|F_n\|_\mathcal{H}\to 0,
\end{eqnarray*}
as $n\to \infty$. From \eqref{Exp1011} and \eqref{Exp1016} for $0\leq\theta\leq 1$, we have 
\begin{eqnarray*}
\alpha \|A u_n\|^2 &\leq& C\{\|F_n\|_\mathcal{H}\|U_n\|_\mathcal{H}+\|F_n\|^2_\mathcal{H}\},\\
\beta\|A^{1/2}v_n\|^2 &\leq& C\{\|F_n\|_\mathcal{H}\|U_n\|_\mathcal{H}+\|F_n\|^2_\mathcal{H}\}.
\end{eqnarray*}
In addition to the estimates  and \eqref{dis-10C} and \eqref{Exp1007} for $0\leq\theta\leq 1$, we have
\begin{eqnarray*}
\|w_n\|^2 +\|z_n\|^2 \to 0.
\end{eqnarray*}

Consequently,
\begin{eqnarray*}
\alpha\|Au_n\|^2 +\beta\|A^{1/2}v_n\|^2+\|w_n\|^2+\|z_n\|^2 \to 0.
\end{eqnarray*}
Therefore, we have  $\|U_n\|_\mathcal{H}\to  0$ but this is absurd, since $\|U_n\|_\mathcal{H}=1$ for all $n\in\N$. Thus, $i\R\subset \rho(\mathbb{B})$.

 This completes the proof of condition \eqref{EImaginario} of the Theorem \ref{LiuZExponential}.


\section{ $S(t)=e^{\mathbb{B}t}$ is not analytic for $\theta \in [0,1[$ and it  is analytical for $\theta = 1 $ }

This section is divided into two subsections: In the first subsection \eqref{SSNanalitica01} we show the lack of analyticity for $0\leq\theta<1 $ and in subsection \eqref{SSNanalitica02} we test the analyticity of $S(t)$ for $\theta= 1$.         


\subsection{Lack of analyticity of $S(t)$  for $\theta\in [0,1[$}
\label{SSNanalitica01}
Semigroups $S(t)=e^{t\mathbb{B}}$ generated by $\mathbb{B}$ is not analytic when $0\leq\theta<1$.
\begin{theorem}
\label{lack KV and structural} Let $S(t)=e^{t\mathbb{B}}$ be the $C_{0}%
$-semigroups of contractions over the Hilbert space $\mathcal{H}$ associated
with the system \eqref{ISplacas0}--\eqref{ISplacas4}  is not analytic when $\theta\in\left[  0,1\right[$.
\end{theorem}
\begin{proof}

Now we show that the corresponding semigroups is not analytic for  $0\leq\theta<1$.  Let us construct a sequence $F_n$ such that the solutions of
\begin{equation*}
i\lambda_n U_n-\mathbb{B}U_n=F_n.
\end{equation*}
satisfies $|\lambda_n|\|U_n\|_\mathcal{H}\to\infty$, which in particular implies
\begin{equation*}
\|\lambda_n(i\lambda_n I-\mathbb{B})^{-1}F_n\|_\mathcal{H}\to\infty
\end{equation*}
which means that the corresponding semigroups is not analytic. 

	The  spectrum of  operator $A=-\Delta$ defined in \eqref{Omenoslaplaciano}  is constituted by positive eigenvalues $(\sigma_n)$ such that $\sigma_n\to \infty$ as $n\to  \infty$. For $n\in \N$ we denote with   $e_n$  an unitary $L^2$-norm eigenvector associated to the eigenvalue $\sigma_n$, that is:
	\begin{equation}\label{auto-10case03}
	Ae_n=\sigma_ne_n,\quad A^\theta e_n=\sigma_n^\theta e_n,\quad  \|e_n\|_{L^2(\Omega)}=1,\quad\text{for}\quad 0\leq \theta<1 ,\; n\in\N
	\end{equation}
Let's show that the right side of inequality \eqref{Analiticity} for $\theta\in[0,1)$ is not verified. Consider the eigenvalues and eigenvectors of the operator $A$ as in \eqref{Omenoslaplaciano} and \eqref{auto-10case03} respectively.
	
	Let $F_n=(0,0,-e_n,0)\in \mathcal{H}$.
	The solution $U=(u_n,v_n,w_n,z_n)$ of the system $(i\lambda I-\mathbb{B})U_n=F_n$ satisfies $w_n=i\lambda u_n$, $z=i\lambda v_n$ and the following equations
	\begin{eqnarray*}
		\lambda^2  u_n-\alpha A^2u_n-i\lambda\gamma Av_n&=& e_n,\\
		\lambda^2  v_n-\beta Av_n+i\gamma\lambda Au_n
		-i\lambda\delta A^{\theta} v_n&=&0.
	\end{eqnarray*}
	Let us see whether this system admits solutions of the form
	\begin{equation*}
	u_n=\mu_n e_n,\quad v_n=\nu_n e_n,
	\end{equation*}
	for some complex numbers $\mu_n$ and $\nu_n$. Then, the numbers $\mu_n$, $\nu_n$ should satisfy the algebraic system
	\begin{eqnarray}\label{eq001system}
	\big\{\lambda^2_n- \alpha\sigma_n^2\big\}\mu_n-i\lambda_n\gamma\sigma_n\nu_n &=& 1,\\
	\label{eq002system}
	i\lambda_n\gamma\sigma_n\mu_n+\big\{\lambda^2_n
	-\beta\sigma_n-i\delta\sigma_n^{\theta}\lambda_n\big\}\nu_n&=& 0.
	\end{eqnarray}
	
	On the other hand solving the system \eqref{eq001system}-\eqref{eq002system}, we find that
	\begin{eqnarray}\label{Muexpre01}
	\mu_n=\frac{\big\{p_{2,n}(\lambda^2_n)
		-i\delta\sigma_n^{\theta}\lambda_n\big\}}
	{p_{1,n}(\lambda^2_n)p_{2,n}(\lambda^2_n)-\gamma^2\lambda^2_n\sigma_n^2
		-i\delta\sigma_n^{\theta}\lambda_n p_{1,n}(\lambda^2_n)},
	\end{eqnarray}
	where
	\begin{eqnarray}\label{P1P2}
	p_{1,n}(\lambda^2_n):=\lambda^2_n-\alpha\sigma_n^2\qquad\text{and}\qquad p_{2,n}(\lambda^2_n)=\lambda_n^2-\beta\sigma_n.
	\end{eqnarray}

	Taking $s_n=\lambda_n^2$ and considering  the polynomial
	\begin{eqnarray*}
	q_n(s_n) & := & p_{1,n}(s_n)p_{2,n}(s_n)-\gamma^2\sigma_n^2s_n\\
	&=& s_n^2-[(\alpha+\gamma^2)\sigma_n^2+\beta\sigma_n]s_n+\alpha\beta\sigma_n^3.	
	\end{eqnarray*}
	Now,  taking   $q_n(s_n)=0$,  we have
	the roots of the polynomial $q_n(s_n)$ are given by
	\begin{eqnarray}
	\label{Sn+-}
	s_n^{\pm}& = & \dfrac{[(\alpha+\gamma^2)\sigma_n^2+\beta\sigma_n]  \pm \sigma_n\sqrt{(\alpha+\gamma^2)^2\sigma_n^2+2\beta(\gamma^2-\alpha)\sigma_n+\beta^2}}{2}.
	\end{eqnarray}	
		
		Thus, if we introduce the notation $x_n\approx y_n$ meaning that $\displaystyle\lim_{n\to \infty}\frac{|x_n|}{|y_n|}$ is a positive real number.
			
	Taking $s_n=s_n^+$ from  equation \eqref{Sn+-}, we have 
\begin{equation}\label{Estsnelambdan}
s_n\approx \sigma_n^2\qquad \text{and}\qquad \lambda_n\approx \sigma_n.
\end{equation}	
Then
	\begin{equation}\label{pnMAIS}
	p_{2,n}(s_n)=s_n-\beta\sigma_n \approx \sigma_n^2.
	\end{equation}
		From $q_n(s_n)=0$ in \eqref{Muexpre01}, we have
	\begin{equation}
	\label{eq001mu}
	\mu_n  =  \frac{\big\{p_{2,n}(\lambda_n^2)
		-i\delta\lambda_n\sigma_n^{\theta}\big\}}{ -i\delta\sigma_n^{\theta}\lambda_np_{1,n}(\lambda^2)}
	=  \dfrac{p_{2,n}
		(\lambda_n^2)}{\gamma^2\lambda_n^2\sigma_n^2}
	+i\dfrac{p^2_{2,n}(\lambda_n^2)}
	{\delta\gamma^2\lambda_n^3\sigma_n^{2+\theta}}.
	\end{equation}
	Therefore
	\begin{equation}\label{EstimatDMU0a1}
	|\mu_n|\approx |\lambda_n|^{-1-\theta}.
	\end{equation}
Finally, of \eqref{auto-10case03}  for $C>0$, the solution $U_n$ of the system $(i\lambda_n-\mathbb{B})U=F_n$, satisfies
\begin{equation}\label{Estima001NA}
\|U_n\|_\mathcal{H}\geq C\|w_n\|=C|\lambda_n|\|u_n\|=
C|\lambda_n||\mu_n|\|e_n\|=C|\lambda_n||\mu_n|=C|\lambda_n|^{-\theta}\quad \text{for}\quad 0\leq\theta<1.
\end{equation}

Then,  using estimates \eqref{EstimatDMU0a1}
in \eqref{Estima001NA}, for $\delta>0$   and  $0\leq\theta<1$,   we obtain 
\begin{equation}
\label{EstimaU0a1e1a2}
|\lambda_n|\|U_n\|_\mathcal{H}\geq \delta|\lambda_n|^{1-\theta}  \qquad  \Longrightarrow\qquad |\lambda_n\|U_n\|_\mathcal{H}\to \infty.
\end{equation}
From where our conclusion follows.
\end{proof}

\subsection{Analyticity  of  $S(t)$ for $\theta=1$} 
\label{SSNanalitica02}

In this subsection we show the analyticity the $S(t)$ for $\theta=1$  using 
Theorem \ref{LiuZAnaliticity}, specifically checking to condition \eqref{Analiticity}( $|\lambda|\|(i\lambda I-\mathbb{B})^{-1}F\|^2_\mathcal{H} \leq C_\delta\{\|F\|_\mathcal{H} \|U\|_\mathcal{H}\}$)
\begin{remark}\label{Lemma06} 
Let $\delta>0$. Exist $C_\delta>0$ such that,	for $0\leq \theta\leq 1$, we have $\frac{\theta-1}{2}\leq 0$. Applying continuous immersions and inequality \eqref{Exp1007}, we have 
	\begin{eqnarray*}
		\|A^{\frac{\theta-1}{2}}w\|^2&\leq&  C_\delta\{ \|F\|_\mathcal{H} \|U\|_\mathcal{H}  \} \qquad \text{for}\qquad 0\leq\theta\leq 1.
	\end{eqnarray*}
\end{remark}
\begin{lemma}\label{Lemma07}
	Let $\delta>0$. Exist $C_\delta>0$ such that the  solutions of equations
	(\ref{esp-10})-(\ref{esp-40}) for $|\lambda|\geq \delta$, satisfy
	\begin{eqnarray*}
	\|A^\frac{\theta}{2}w \|^2& \leq &C_\delta\{ \|F\|_\mathcal{H} \|U\|_\mathcal{H}  \} \qquad \text{for}\qquad 0\leq\theta\leq 1.
	\end{eqnarray*}
\end{lemma}
\begin{proof}
	From $0\leq\theta\leq 1$, then $\sigma=\theta-2\leq -1$. Therefore taking $\sigma=\theta-2$ in the Lemma \ref{Lemma05}, we have
	\begin{eqnarray}
	\nonumber
\hspace*{-1.1cm}	\dfrac{\gamma\alpha}{\beta}\|A^\frac{\theta}{2}w\|^2 &=&\gamma\|A^{\frac{\theta-1}{2}}z\|^2-\alpha
	\text{Re}\{\langle A^{\theta-\frac{1}{2}}u, A^\frac{1}{2}f_2\rangle+\langle A^{\theta-1}v, Af_1\rangle \} \\
	\label{Eq00Lemma07}
	& &
	+\dfrac{\delta\alpha}{\beta}
	\text{Re}\dual{A^\frac{3\theta-2}{2}z}{A^\frac{\theta}{2}w}
	-\dfrac{\alpha}{\beta}\text{Re}\langle f_4,A^{\theta-1}w\rangle-\text{Re}\langle f_3,A^{\theta-2} z\rangle\\
	\nonumber
	& &-\dfrac{\alpha}{\beta} \text{Im}\langle z,\lambda A^{\theta-1}w\rangle-\lambda\text{Im}\langle A^\frac{\theta}{2}w,A^\frac{\theta-4}{2} z\rangle.
	\end{eqnarray}
	From equation \eqref{esp-30}, we have $\lambda A^{\theta-1} w=i\alpha A^{\theta+1}u+i\gamma A^\theta z-iA^{\theta-1}f_3$,  therefore
	\begin{eqnarray}
\hspace*{-.8cm}	-\dfrac{\alpha}{\beta}\text{Im}\dual{z}{\lambda A^{\theta-1}w} & =& -\dfrac{\alpha}{\beta}\text{Im}\dual{A^\frac{\theta}{2}z}{i\alpha A^\frac{\theta+2}{2}u}+\dfrac{\alpha\gamma}{\beta}\|A^\frac{\theta}{2}z\|-\dfrac{\alpha}{\beta}\text{Re}\dual{z}{A^{\theta-1}f_3}
	\label{Eq01Lemma07}
\end{eqnarray}	
	Applying Cauchy-Schwarz and Young inequalities, estimative \eqref{dis-10} and  for $1>\varepsilon>0$, exist $K_\varepsilon>0$, we get
		\begin{eqnarray}
		\bigg|\dfrac{\alpha}{\beta}\text{Im}\dual{z}{\lambda A^{\theta-1}w}\bigg |& \leq& K_\varepsilon  \|F\|_\mathcal{H} \|U\|_\mathcal{H} +\varepsilon \|A^\frac{2+\theta}{2}u\|^2
	\label{Eq01Lemma08}
	\end{eqnarray}	
	On the outer hand,   applying the product duality to equation \eqref{esp-30} with $A^\theta u$ and recalling that the operator $A$ is seft-adjoint,  we obtain
	\begin{eqnarray*}
	\alpha	\|A^\frac{2+\theta}{2}u\|^2 &= &\dual{w}{A^\theta(i\lambda u)}-\gamma\dual{A^\frac{\theta}{2}z} {A^\frac{2+\theta}{2}u}+\dual{f_3}{A^\theta u}\\
	&=& \|A^\frac{\theta}{2}w\|^2+\dual{w}{A^\theta f_1}-\gamma\dual{A^\frac{\theta}{2}z}{A^\frac{2+\theta}{2}u}+\dual{f_3}{A^\theta},
		\end{eqnarray*}
	now applying Cauchy-Schwarz and Young inequalities for every $\varepsilon>0$, there exists a positive constant $K_\varepsilon$, independent of $\lambda$, such that
	\begin{equation}\label{Eq02Lemma07}
	\|A^\frac{2+\theta}{2}u\|^2 \leq C\{ \|F\|_\mathcal{H} \|U\|_\mathcal{H} \}+\|A^\frac{\theta}{2} w\|^2.
	\end{equation}
	Using \eqref{Eq02Lemma07} in  \eqref{Eq01Lemma08},  we obtain
	\begin{equation}\label{Eq03Lemma07}
	-\dfrac{\lambda\alpha}{\beta}\text{Im}\dual{z}{A^{\theta-1}w} \leq \varepsilon\|A^\frac{\theta}{2}w\|^2+ C\{ \|F\|_\mathcal{H} \|U\|_\mathcal{H} \}.
	\end{equation}
	Similarly   as $-\lambda{\rm Im}\dual{A^\frac{\theta}{2}w}{A^\frac{\theta-4}{2}z}= -\lambda{\rm Im}\dual{A^{\theta-2}w}{z}$ and from equation \eqref{esp-30}, we have $A^{\theta-2}\lambda w=i\alpha A^\theta  u+i\gamma A^{\theta-1} z-iA^{\theta-2}f_3$,  therefore,
	\begin{eqnarray}
	\hspace*{-.8cm}	-\lambda\text{Im}\dual{A^{\theta-2}w}{z} & =& {\rm Im}\{ -i\alpha\dual{A^\theta u}{z}-i\gamma\|A^\frac{\theta-1}{2}z\|^2+i\dual{A^{\theta-2}f_3}{z}\}
	\label{Eq04Lemma07}
	\end{eqnarray}	
	Applying Cauchy-Schwarz and Young inequalities, estimative \eqref{dis-10} and  for $1>\varepsilon>0$, exist $K_\varepsilon>0$, we get
	\begin{eqnarray}
	\hspace*{-.8cm}	-\text{Im}\dual{A^{\theta-2}w}{z} & \leq& K_\varepsilon\|A^\frac{\theta}{2}z\|^2+   \varepsilon \|A^\frac{\theta}{2}u\|^2 +C\|A^\frac{\theta-1}{2}z\|^2+C \|F\|_\mathcal{H} \|U\|_\mathcal{H}.
	\label{Eq05Lemma07}
	\end{eqnarray}	
From $\frac{\theta-1}{2}<\frac{\theta}{2}\leq\frac{2+\theta}{2}$ using continuous embedding and estimates \eqref{dis-10} and \eqref{Eq02Lemma07}, we obtain

\begin{equation}\label{Eq06Lemma07}
-\text{Im}\dual{A^{\theta-2}w}{z} \leq  \varepsilon\|A^\frac{\theta}{2}w\|^2+ C_\delta\{ \|F\|_\mathcal{H} \|U\|_\mathcal{H} \}.
\end{equation}

	Applying Cauchy-Schwarz and Young inequalities in equation \eqref{Eq00Lemma07},  for $1>\varepsilon>0$, exist $K_\varepsilon>0$ and estimates \eqref{Eq03Lemma07} and \eqref{Eq06Lemma07} and from $\frac{\theta-4}{2}<\frac{\theta-1}{2}<\frac{\theta}{2}$ using continuous embedding   for every $\varepsilon>0$, there exists a positive constant $K_\varepsilon$, independent of $\lambda$, such that
	\begin{eqnarray}\label{Eq7050}
	\|A^\frac{\theta}{2}w\|^2 &\leq & C\|A^\frac{\theta}{2}z\|^2
	+ C_\delta\{ \|F\|_\mathcal{H} \|U\|_\mathcal{H}  \}
	+\varepsilon\|A^\frac{\theta}{2}w\|^2.
	\end{eqnarray}
	Finally from inequality \eqref{dis-10}  in the inequality \eqref{Eq7050} finish to proof.
	
\end{proof}
\begin{remark}\label{Remark01}
	Using Lemma \ref{Lemma07} in the inequality \eqref{Eq02Lemma07}, we have
	\begin{eqnarray}
	\label{Lemma09A} 
	\|A^\frac{\theta+2}{2}u\|^2&\leq& C_\delta\{ \|F\|_\mathcal{H} \|U\|_\mathcal{H}\}\qquad{\rm for}\qquad 0\leq \theta\leq 1.
	\end{eqnarray}
	And  taking $\theta=1$ in Lemma \ref{Lemma07}, we have 
	\begin{equation}\label{Estimativa02}
	\|A^\frac{1}{2}w\|\leq C_\delta\{ \|F\|_\mathcal{H} \|U\|_\mathcal{H} \}^\frac{1}{2}.
	\end{equation}
\end{remark}


\begin{remark}\label{Remark02}
	Taking $\theta=1$ in inequality \eqref{Lemma09A}  to Remark \ref{Remark01}, we have 
	\begin{equation}\label{Estimativa03}
	\|A^\frac{3}{2}u\|^2\leq C_\delta\{ \|F\|_\mathcal{H} \|U\|_\mathcal{H} \}.
	\end{equation}
\end{remark}
\begin{lemma}\label{Lemma12} Let $\theta=1 $ and $\delta>0$. Exist $C_\delta>0$ such that the  solutions of equations (\ref{esp-10})-(\ref{esp-40})  for $|\lambda|\geq \delta$, satisfy:
	\begin{eqnarray*}
|\lambda| \|z\|^2&\leq& C_\delta\{\|F\|_\mathcal{H}\|U\|_\mathcal{H}\}.
	\end{eqnarray*}
\end{lemma}
\begin{proof} 
Applying the product duality to equation \eqref{esp-40}  with  $z$ and recalling that the operator $A$ is self-adjoint,  we have
\begin{eqnarray}\nonumber
i\lambda\|z\|^2=-\beta\dual{A^\frac{1}{2}v}{A^\frac{1}{2}z}+\gamma\dual{A^\frac{1}{2}w}{ A^\frac{1}{2}z}
-\delta \|A^\frac{\theta}{2}z\|^2+\dual{f_4}{z}.
\end{eqnarray}
Taking the imaginary part and using Cauchy-Schwarz and Young  inequalities,  we obtain
\begin{equation}\label{Eq01Lemma12}
|\lambda|\|z\|\leq C_\delta\{ \|A^\frac{1}{2}z\|^2+\|A^\frac{1}{2}w\|^2+\|A^\frac{1}{2}v\|^2+\|f_4\|\|z\|\}.
\end{equation}
From estimates \eqref{dis-10}, \eqref{Exp1016},   \eqref{Estimativa02} and norms  $\|F\|_\mathcal{H}^2$ and $\|U\|^2_\mathcal{H}$, finish to proof.

\end{proof}

\begin{lemma}\label{Lemma14}
	Let  $\theta=1$ and $\delta>0$,  exists $C_\delta>0$,  such that,  the solutions of equations (\ref{esp-10})-(\ref{esp-40}) satisfy the following inequality:
	\begin{eqnarray*}
		| \lambda|\|w\|^2 \leq  C_\delta\{ \|F\|_\mathcal{H}\|U\|_\mathcal{H}\}.
	\end{eqnarray*}
\end{lemma}
\begin{proof}
Considering $\theta=1$,  applying the product duality to equation \eqref{esp-30} with $w$ and recalling that the operator $A$  is
self-adjoint, we have
\begin{eqnarray}
i\lambda\|w\|^2=-\alpha\dual{A^\frac{3}{2}u}{A^\frac{1}{2}w}-\gamma\dual{A^\frac{1}{2}z}{A^\frac{1}{2}w}+\dual{f_3}{w}.
\end{eqnarray}
Taking the imaginary part and using Cauchy-Schwarz and Young  inequalities,  we obtain
\begin{equation}\label{Eq01Lemma14}
|\lambda|\|w\|^2\leq C_\delta\{\|A^\frac{3}{2}u\|^2+ \|A^\frac{1}{2}z\|^2+\|A^\frac{1}{2}w\|^2+\|f_3\|\|w\|\}.
\end{equation}
From estimates \eqref{dis-10}, \eqref{Estimativa02}, \eqref{Estimativa03},    and norms  $\|F\|_\mathcal{H}^2$ and $\|U\|^2_\mathcal{H}$, finish to proof.
\end{proof}
\begin{lemma}\label{Lemma15}
	Let  $\theta=1$ and $\delta>0$,  exists $C_\delta>0$,  such that,  the solutions of equations (\ref{esp-10})-(\ref{esp-40}) satisfy the following inequality:
	\begin{eqnarray}\label{EAnaliti010}
		| \lambda|\|Au\|^2 \leq  C_\delta\{ \|F\|_\mathcal{H}\|U\|_\mathcal{H}\}.
	\end{eqnarray}
\end{lemma}
\begin{proof}
Considering $\theta=1$,  applying the product duality to equation \eqref{esp-30} with $w$, using \eqref{esp-10} and recalling that the operator $A$  is
self-adjoint, we have
\begin{eqnarray}
i\lambda\|w\|^2=-\alpha\dual{A^2 u}{i\lambda u-f_1}-\gamma\dual{A^\frac{1}{2}z}{A^\frac{1}{2}w}+\dual{f_3}{w}.
\end{eqnarray}
Equivalent
\begin{eqnarray*}
\alpha i\lambda\|Au\|^2=-i\lambda\|w\|^2+\dual{Au}{Af_1}-\gamma\dual{A^\frac{1}{2}z}{A^\frac{1}{2}w}+\dual{f_3}{w}.
\end{eqnarray*}
Taking the imaginary part and using Cauchy-Schwarz and Young  inequalities,  we obtain
\begin{equation}\label{Eq01Lemma15}
|\lambda|\|Au\|^2\leq C_\delta\{|\lambda|\|w\|^2+ \|A^\frac{1}{2}z\|^2+\|A^\frac{1}{2}w\|^2+\|Au\|\|Af_1\|+\|f_3\|\|w\|\}.
\end{equation}
From estimates \eqref{dis-10},  Lemma \ref{Lemma14}, \eqref{Estimativa02}   and norms  $\|F\|_\mathcal{H}^2$ and $\|U\|^2_\mathcal{H}$, finish to proof.

\end{proof}
Finally,  the following lemma estimates the term $|\lambda|\|A^\frac{1}{2}v\|^2$. 
\begin{lemma}\label{Lemma16}
	Let  $\theta=1$ and $\delta>0$,  exists $C_\delta>0$,  such that,  the solutions of equations (\ref{esp-10})-(\ref{esp-40}) satisfy the following inequality:
	\begin{eqnarray}\label{EAnaliti011}
		| \lambda|\|A^\frac{1}{2}v\|^2 \leq  C_\delta\{ \|F\|_\mathcal{H}\|U\|_\mathcal{H}\}.
	\end{eqnarray}
\end{lemma}
\begin{proof}
Considering $\theta=1$,  applying the product duality to equation \eqref{esp-40} with $z$, using \eqref{esp-10} and recalling that the operator $A$  is
self-adjoint, we have
\begin{eqnarray}
i\lambda\|z\|^2=-\beta\dual{A v}{i\lambda v-f_2}+\gamma\dual{A^\frac{1}{2}w}{A^\frac{1}{2}z}-\delta\|A^\frac{1}{2}z\|^2+\dual{f_4}{z}.
\end{eqnarray}
Equivalent
\begin{eqnarray*}
i\beta \lambda\|A^\frac{1}{2}v\|^2=-i\lambda\|z\|^2+\beta\dual{A^\frac{1}{2}v}{A^\frac{1}{2}f_2}+\gamma\dual{A^\frac{1}{2}w}{A^\frac{1}{2}z}-\delta\|A^\frac{1}{2}z\|^2+\dual{f_4}{z}.
\end{eqnarray*}
Taking the imaginary part and using Cauchy-Schwarz and Young  inequalities,  we obtain
\begin{equation}\label{Eq01Lemma16}
|\lambda|\|A^\frac{1}{2}v\|^2\leq C_\delta\{|\lambda|\|z\|^2+ \|A^\frac{1}{2}z\|^2+\|A^\frac{1}{2}w\|^2+\|A^\frac{1}{2}v\|\|A^\frac{1}{2}f_2\|+\|f_4\|\|z\|\}.
\end{equation}
From estimates \eqref{dis-10},  Lemma \ref{Lemma15}, \eqref{Estimativa02}   and norms  $\|F\|_\mathcal{H}^2$ and $\|U\|^2_\mathcal{H}$, finish to proof.

\end{proof}

For $\theta=1$,   summing estimates the  Lemmas \ref{Lemma12}, \ref{Lemma14}, \ref{Lemma15}  and Lemma  \ref{Lemma16},  we have
\begin{equation}\label{EAnaliti005}
|\lambda|\|U\|^2_\mathcal{H}\leq C_\delta\|F\|_\mathcal{H}\|U\|_\mathcal{H} \quad\Longrightarrow\quad \|\lambda(i\lambda I-\mathbb{B})^{-1}\|_{\mathcal{L}({\mathcal{H}})}\leq C_\delta. 
\end{equation}
Therefore for $\theta=1$,  the condition  \eqref{Analiticity}  is also verified,  so the proof of the Theorem \ref{LiuZAnaliticity} is finished.
\section{ $S(t)=e^{\mathbb{B}t}$ is  of Gevrey sharp class $s> \frac{1}{\theta}$   when the parameter $\theta$ lies in the interval $]0,1[$.}
Before exposing our results, it is useful to recall the next definition and result  presented in \cite{SCRT1990, Tebou-2020} (adapted from
\cite{TaylorM}, Theorem 4, p. 153]).

\begin{definition}\label{Def1.1Tebou} Let $t_0\geq 0$ be a real number. A strongly continuous semigroup $S(t)$, defined on a Banach space $ \mathcal{H}$, is of Gevrey class $s > 1$ for $t > t_0$, if $S(t)$ is infinitely differentiable for $t > t_0$, and for every compact set $K \subset (t_0,\infty)$ and each $\mu > 0$, there exists a constant $ C = C(\mu, K) > 0$ such that
	\begin{equation}\label{DesigDef1.1}
	||S^{(n)}(t)||_{\mathcal{L}( \mathcal{H})} \leq  C\mu ^n(n!)^s,  \text{ for all } \quad t \in K, n = 0,1,2...
	\end{equation}
\end{definition}
\begin{theorem}[\cite{TaylorM}]\label{Theorem1.2Tebon}
	Let $S(t)$  be a strongly continuous and bounded semigroups on a Hilbert space $ \mathcal{H}$. Suppose that the infinitesimal generator $\mathbb{B}$ of the semigroups $S(t)$ satisfies the following estimate, for some $0 < \tau < 1$:
	\begin{equation}\label{Eq1.5Tebon2020}
	\lim\limits_{|\lambda|\to\infty} \sup |\lambda |^\tau ||(i\lambda I-\mathbb{B})^{-1}||_{\mathcal{L}( \mathcal{H})} < \infty. 
	\end{equation}
	Then $S(t)$  is of Gevrey  class  $s$   for $t>0$, for every   $s >\dfrac{1}{\tau}$.
\end{theorem}
\begin{lemma}\label{Lemma01Gevrey}
Let $ 0<\theta<1$ and $\delta_1>0$, exists $C_{\delta_1}>0$, such that, the solutions of equations \eqref{esp-10}--\eqref{esp-40} satisfy the following inequality 
\begin{eqnarray}
\label{Eq01Lemma01G}
(i)\quad|\lambda|\|A^\frac{\theta-1}{2}z\|^2 &\leq & C_{\delta_1} \|F\|_\mathcal{H}\|U\|_\mathcal{H}\quad{\rm for}\quad 0<\theta<1.\\
(ii)\quad |\lambda|\|A^\frac{\theta-1}{2}w\|^2 &\leq & C_{\delta_1} \|F\|_\mathcal{H}\|U\|_\mathcal{H}\quad{\rm for}\quad 0<\theta<1.
\end{eqnarray}
\end{lemma}
\begin{proof}
{\bf (i)}  Taking the duality product between equation \eqref{esp-40} and $A^{\theta-1}z$, using advantage of the seft-adjointness of the powers of the operator $A$, we get
\begin{equation*}
i\lambda\|A^\frac{\theta-1}{2}z\|^2=-\beta\dual{A^\frac{1}{2}v}{A^{\theta-\frac{1}{2}}z}+\gamma\dual{A^\frac{\theta}{2}w}{A^\frac{\theta}{2}z}-\delta\|A^\frac{2\theta-1}{2}z\|^2+\dual{f_4}{A^{\theta-1} z}.
\end{equation*}
Taking imaginary part, applying Cauchy-Schwarz inequalities, we have
\begin{equation}
\label{Eq02Lemma02G}
|\lambda|\|A^\frac{\theta-1}{2}z\|^2\leq  C_{\delta_1}\{\|A^\frac{1}{2}v\|^2+\|A^{\theta-\frac{1}{2}}z\|^2+\|A^\frac{\theta}{2}w\|^2+\|A^\frac{\theta}{2}z\|^2\}+\|f_4\|\|A^{\theta-1}z\|,
\end{equation}
As for $0<\theta<1$, we have $\theta-\frac{1}{2}\leq \frac{\theta}{2}$ and $\theta-1<0$, using continuous embedding and estimates \eqref{dis-10}, \eqref{Exp1016}  and   Lemma \ref{Lemma07}, we finish proof this item.
\\
Proof. ${\mathbf(ii)}$  Taking the duality product between equation \eqref{esp-30} and $A^{\theta-1}w$, using advantage of the seft-adjointness of the powers of the operator $A$, we get
\begin{equation*}
i\lambda\|A^\frac{\theta-1}{2}w\|^2=-\alpha\dual{A^\frac{\theta+2}{2}u}{A^\frac{\theta}{2}w}-\gamma\dual{A^\frac{1}{2}z}{A^{\theta-\frac{1}{2}}w}+\dual{f_3}{A^{\theta-1} w}.
\end{equation*}
Taking imaginary part, applying Cauchy-Schwarz inequalities, we have
\begin{equation}
\label{Eq03Lemma02G}
|\lambda|\|A^\frac{\theta-1}{2}w\|^2\leq  C_{\delta_1}\{\|A^\frac{\theta+2}{2}u\|^2+\|A^\frac{\theta}{2}w\|^2+\|A^\frac{\theta}{2}z\|^2\}+\|f_3\|\|A^{\theta-1}w\|,
\end{equation}
As for $0<\theta<1$, we have  $\theta-1<0$, using continuous embedding and estimates \eqref{dis-10}, Remark \ref{Remark01}  and  Lemma \ref{Lemma07}, we finish proof this item.
\end{proof}

Our main result in this section is as follows:
\begin{theorem} \label{TCGevrey}
Let  $S(t)=e^{\mathbb{B}t}$  strongly continuos-semigroups of contractions on the Hilbert space $ \mathcal{H}$, the semigroups $S(t)$ is of Gevrey class $s$ for every $s>\dfrac{1}{\tau}$ for $\tau\in ]0,1[$, as there exists a positive constant $C$ such that we have the resolvent estimative:
	\begin{equation}\label{Eq1.6Tebon2020}
	|\lambda |^\tau ||(i\lambda I-\mathbb{B})^{-1}||_{\mathcal{L}( \mathcal{H})} \leq C, \quad \lambda\in\R. 
	\end{equation}	
\end{theorem}
\begin{proof}
We will initially show that for $0<\theta<1$ it is verified:
\begin{equation}\label{Eq00Gevrey}
|\lambda|^\theta\|z\|^2\leq C_{\delta_1}\|F\|_\mathcal{H}\|\|U\|_\mathcal{H}\qquad{\rm and}\qquad |\lambda|^\theta\|w\|^2\leq C_{\delta_1}\|F\|_\mathcal{H}\|\|U\|_\mathcal{H}.
\end{equation}

As for $0<\theta<1$, we have $0\in \big[\frac{\theta-1}{2},\frac{\theta}{2}\big]$. We are going to use an interpolation inequality. Since
\begin{equation*}
0=\phi\bigg(\dfrac{\theta-1}{2}\bigg)+(1-\phi)\bigg(\dfrac{\theta}{2}\bigg), \quad{\rm for}\quad \phi=\theta\quad{\rm and}\quad 1-\phi=1-\theta,
\end{equation*}
using inequalities \eqref{dis-10} and item $(i)$ of Lemma \ref{Lemma01Gevrey}, we get that
\begin{eqnarray*}
\|z\|^2 &\leq & C(\|A^\frac{\theta-1}{2}z\|^2)^\theta(\|A^\frac{\theta}{2}z\|^2)^{1-\theta}\\
&\leq & C|\lambda|^{-\theta}\{\|F\|_\mathcal{H}\|U\|_\mathcal{H}\}^\theta\{\|F\|_\mathcal{H}\|U\|_\mathcal{H}\}^{1-\theta}.
\end{eqnarray*}
From where we ended the proof of \eqref{Eq00Gevrey}$_1$

On the other hand. As for $0<\theta<1$, we have $0\in \big[\frac{\theta-1}{2},\frac{\theta}{2}\big]$. We are going to use an interpolation inequality. Since
\begin{equation*}
0=\phi\bigg(\dfrac{\theta-1}{2}\bigg)+(1-\phi)\bigg(\dfrac{\theta}{2}\bigg), \quad{\rm for}\quad \phi=\theta\quad{\rm and}\quad 1-\phi=1-\theta,
\end{equation*}
using Lemma \ref{Lemma07} and item $(ii)$ of Lemma \ref{Lemma01Gevrey}, we get that
\begin{eqnarray*}
\|w\|^2 &\leq & C(\|A^\frac{\theta-1}{2}w\|^2)^\theta(\|A^\frac{\theta}{2}w\|^2)^{1-\theta}\\
&\leq & C|\lambda|^{-\theta}\{\|F\|_\mathcal{H}\|U\|_\mathcal{H}\}^\theta\{\|F\|_\mathcal{H}\|U\|_\mathcal{H}\}^{1-\theta}.
\end{eqnarray*}
From where we ended the proof of \eqref{Eq00Gevrey}$_2$
\\
Now we will estimate the term $|\lambda|\|Au\|^2$.  Making the duality product between equation \eqref{esp-30}  and $\lambda u$ and using the equation \eqref{esp-10}, we have
\begin{eqnarray*}
	\alpha_1\lambda \|Au\|^2&=&\lambda\dual{w}{i\lambda u}-\gamma\dual{\dfrac{\lambda}{|\lambda|^\frac{1}{2}}z}{|\lambda|^\frac{1}{2}Au} +\dual{f_3}{\lambda u}\\
	\nonumber
	& =& \lambda\|w\|^2+\dual{i\alpha_1A^2u+i\gamma Az-if_3}{f_1}-\gamma\dual{\dfrac{\lambda}{|\lambda|^\frac{1}{2}}z}{|\lambda|^\frac{1}{2}Au} +\dual{f_3}{-iw-i f_1} .
\end{eqnarray*}
Applying Cauchy-Schwarz and Young inequalities, for $\varepsilon>0$, there exists a positive constant $K_\varepsilon$, independent of $\lambda$,  such that:
\begin{eqnarray}\label{Eq001AnalyR1N}
|\lambda|\|Au\|^2&\leq &C|\lambda|\|w\|^2+C\{|\dual{Au}{Af_1}|+|\dual{z}{Af_1}|+|\dual{f_3}{f_1}|\}
\\
\nonumber
& & +K_\varepsilon|\lambda|\|z\|^2+\varepsilon|\lambda|\|Au\|^2+C|\dual{f_3}{w}|.
\end{eqnarray}
Now applying Cauchy-Schwarz and Young inequalities   and from estimative \eqref{Eq00Gevrey},  we have
\begin{eqnarray}\label{Eq001AnalyRN2}
|\lambda|\alpha_1\|Au\|^2&\leq &C|\lambda|^{1-\theta}\{\|F\|_\mathcal{H}\|U\|_\mathcal{H}\}    \quad\text{for}\quad 0\leq\theta\leq 1.
\end{eqnarray}

Finally we'll get the  estimative for $|\lambda|\|A^\frac{1}{2}v\|^2$, taking the duality product between equation \eqref{esp-30}  and $w$ and using the equation \eqref{esp-10}, we have
\begin{eqnarray}\label{Eq001AnalyR}
i\lambda \|w\|^2-i\lambda\alpha_ 1\|Au\|^2&=&-\gamma\langle A^\frac{1}{2}z, A^\frac{1}{2}w\rangle +\alpha_1\langle Au, A f_1\rangle+\langle f_3, w\rangle.
\end{eqnarray}
Now, taking the duality product between equation \eqref{esp-40}  and $z$ and using the equation \eqref{esp-20}, we have
\begin{equation}\label{Eq002AnalyR}
i\lambda\|z\|^2+\delta \|A^\frac{\theta}{2}z \|^2=i\lambda \alpha_2 \| A^\frac{1}{2} v \|^2+\alpha_2\langle A^\frac{1}{2} v,A^\frac{1}{2}  f_2\rangle
+\gamma\langle A^\frac{1}{2}w, A^\frac{1}{2}z\rangle +\langle f_4, z\rangle.
\end{equation}
Subtracting the equations \eqref{Eq001AnalyR} and \eqref{Eq002AnalyR} and  taking the imaginary part  and noting that 
$$\rm{Im}\{\dual{A^\frac{1}{2}z}{A^\frac{1}{2}w}+\dual{A^\frac{1}{2}w}{A^\frac{1}{2}z}\}=0,$$
 we obtain
\begin{eqnarray}\nonumber
\gamma \lambda\alpha_ 2\|A^\frac{1}{2}v\|^2\hspace*{-0.3cm}& = &\hspace*{-0.3cm}\gamma \rm{Im} \{ \alpha_1 \langle Au, Af_1\rangle+ \langle f_3, w\rangle-\alpha_2 \langle A^\frac{1}{2}v, A^\frac{1}{2}f_2 \rangle-\langle f_4,z\rangle \}\\
\label{Eq003AnalyR}
& &+\gamma\lambda\alpha_1\|Au\|^2 +\gamma\lambda[\|z\|^2-\|w\|^2] 
\end{eqnarray}

On the other hand, 	now applying Cauchy-Schwarz and Young inequalities in \eqref{Eq003AnalyR}, using estimates   \eqref{Eq00Gevrey}$_1$, \eqref{Eq00Gevrey}$_2$ and \eqref{Eq001AnalyRN2}, we find
\begin{equation}\label{Eq104AEAnaly}
|\lambda|\alpha_2\|A^\frac{1}{2}v\|^2\hspace*{-0.1cm} \leq\hspace*{-0.1cm} C|\lambda|^{1-\theta}\{\|F\|_\mathcal{H}\|U\|_\mathcal{H}\}\quad\text{for}\quad 0<\theta < 1.
\end{equation}

Finally,  adding the estimates \eqref{Eq00Gevrey}$_1$, \eqref{Eq00Gevrey}$_2$, \eqref{Eq001AnalyRN2} and  \eqref{Eq104AEAnaly}, we find.
\begin{equation}\label{ClaseGevrey}
|\lambda|\|U\|^2_\mathcal{H}\leq C |\lambda|^{1-\theta}\{\|F\|_\mathcal{H}\|U\|_\mathcal{H}\}\quad \rm{for}\quad 0 <\theta < 1.
\end{equation}
Then, for every  $\varepsilon>0$, there exists  positive constant $K_\varepsilon$, independent of $\lambda$ such that:
$$|\lambda|\|U\|_\mathcal{H}^2\leq C|\lambda|^{1-\theta}\|F\|^2_\mathcal{H}\qquad \Longleftrightarrow \qquad \dfrac{|\lambda|^\tau \|(i\lambda I-\mathbb {B})^{-1}F\|_\mathcal{H}}{\|F\|_\mathcal{H}}\leq C$$
where  $\tau=\theta >0$ for $0<\theta<1$.  Therefore
\begin{equation}\label{Eq003Grevrey}
|\lambda|^\tau\|(i\lambda I-\mathbb {B})^{-1})^{-1}\|_{\mathcal{L}( \mathcal{H})}\leq C. 
\end{equation}

\end{proof}
So,  applying $\limsup$  when $|\lambda|\to\infty$ in \eqref{Eq003Grevrey}  of                        Theorem \ref{Theorem1.2Tebon} $S(t)$ is of  the class Gevrey  $s$, for every $s>\frac{1}{\theta}$.

Finally, of the inequalities \eqref{Eq003Grevrey} and Theorem \ref{Theorem1.2Tebon}, the inequality \eqref{DesigDef1.1} is verified and $S(t)$ is the Gevrey class $s> \frac{1}{\theta}$. Therefore, from the definition \ref {Def1.1Tebou}, the semigroups $ S (t) = e ^{\mathbb {B} t} $ is infinitely differentiable in $ \mathbb {B} $ for all $ t> 0$ and $\theta \in(0,1)$.

\begin{remark}[Gevrey Class Sharp]
The Gevrey classes determined above are Sharp, for the meaning of Sharp is given by the following theorem:
\begin{theorem}\label{TSharp}
The function $\phi(\theta)=\theta$ for $\theta\in ]0, 1[$  that determine the Gevrey classes of the semigroups $S(t)=e^{t\mathbb{B}}$ is sharp, in the sense:  If
\begin{equation}\label{Eq01Sharp}
\Phi:=\theta+\delta_0\quad {\rm for\;  all} \quad \delta_0>0 \quad{\rm such\; that}\quad  \theta+\delta_0<1 \quad{\rm and}\quad 0<\theta<1, 
\end{equation}
then
\begin{equation}\label{Eq02Sharp}
s >\dfrac{1}{\Phi }\qquad {\rm for}  \qquad  0<\theta <1, 
\end{equation}
is not a Gevrey class of the semigroup  $S(t)=e^{t\mathbb{B}}$.
\end{theorem}
 \begin{proof}
 To prove this theorem, we will use the results obtained in the  Theorem \ref{TCGevrey} and the estimates determined in the equations \eqref{Estima001NA}.  i.e, from estimative \eqref{Estima001NA},  we have
 \begin{equation*}
 |\lambda_n|^{\Phi}\|U_n\|_{\mathbb{H}}= K|\lambda_n|^{\theta+\delta_0}\|U_n\|_\mathcal{H}\geq K|\lambda_n|^{\delta_0}\to \infty,  
 \quad  {\rm when} \quad  |\lambda_n| \to \infty  
 \end{equation*}
 Therefore  $\Phi$ does not verify the \eqref{Eq1.6Tebon2020} condition of the  Theorem \ref{TCGevrey} concerning class Gevrey.
 
Then the Gevrey class  $s>\frac{1}{\theta}$  for $\theta\in ]0,1[$  the semigroup  $S(t)$ is  Sharp. 

\end{proof}
\end{remark}

{\bf Acknowledgments}
	This research was partially carried out during the visit of the first author at the Institute of Pure and Applied Mathematics (IMPA) in the 2019 summer post-doctoral program, the warm hospitality and the loan from the office of Professor Mauricio Peixoto (in memory) were greatly appreciated.  Special thanks to researcher Felipe Linares for ensuring the visit.

\bibliographystyle{amsplain}

\begin{thebibliography}{99}
	
\bibitem{Alabau1999}
F. Alabau, Stabilisation frontière indirecte de systèmes faiblement couplés. C. R. Acad. Sci. Paris Sér. I Math. 328 (1999), 1015-1020.

\bibitem{Alabau-2002}
F. Alabau, P. Cannarsa and V.  Komornik, Indirect internal stabilization of weakly coupled systems. J. Evolution Equations 2 (2002), 127-150.

\bibitem{Alabau2002-2}
F. Alabau-Boussouira, Indirect boundary stabilization of weakly coupled hyperbolic systems. SIAM J. Control Optim. 41 (2002), 511-541.

\bibitem{Alabau-2011}
F. Alabau, P. Cannarsa,  and R. Guglielmi, Indirect stabilization of  weakly coupled system with hybrid boundary conditions. Mathematical Control and Related Fields 4 (2011), 413-436.

	
\bibitem{ASIsolaFMP}
S. Alessandroni, F. dell'Isola and M. Porfiri, A Revival of Electric Analogs for Vibrating Mechanical System Aimed to Their Efficient Control by PZT Actuators. International Journal of Solids and Structures 39, (2002), 5295-5324. 	
	
\bibitem{EngelPruss-2006}
A. Bátkai, K-J. Engel, J. Pr\"us and R. Schnaubelt, Polynomial stability of operator semigroups.  Math. Nachr. 279, No. 13 -14, (2006), 1425-1440.	

\bibitem{Borichev}
A. Borichev and Y. Tomilov, Optimal polynomial decay of functions and operator semigroups, Math. Ann. 347 (2010), 455-478.

\bibitem{SCRT1990}
S. Chen and R. Triggiani, Gevrey Class Semigroups Arising From Elastic Systems With Gentle Dissipation: The Case $0<\alpha<\frac{1}{2}$, Proceedings of the American Mathematical Society, Volume 110, Number 2, Outober (1990), 401-415.

\bibitem{Dell'Oro}
F. Dell'Oro, J.E.M. Rivera and V. Pata, Stability properties of an abstract system with applications to linear thermoelastic plates. J. Evol. Equ. 13 (2013), 777-794.

\bibitem{Engel-Nagel-2000}
K.J. Engel and R. Nagel, One-parameter semigroups for linear evolution equations, Springer 2000.

\bibitem{LMJAIME2012}
L. H. Fatori,  M. Zegarra G.  and  J. E. Muñoz R. ,  Diferrentiability,  Analyticity and Optimal Rates of Decay for Dampend Wave Equations.  Electronic Journal of Differential Equations, Vol 2012 No 48  (2012),  1-13.

\bibitem{Gearhart}
Gearhart. Spectral theory for contraction semigroups on
Hilbert spaces. Trans. Amer. Math. Soc. 236 (1978), 385-394.

\bibitem{Guglielmi2015}
R. Guglielmi, Indirect stabilization of hyperbolic systems through resolvent estimates. Evol. Equ. Control Theory 6 (2015), 59-75.

\bibitem{HanLiu}
Z. J. Han   and Z. Liu, Regularity and stability of coupled plate equations with indirect structural or Kelvin-Voigt damping,  ESAIM: Control, Optimisation and Calculus of Variation  (2018).

\bibitem{Hao-2015}
J. Hao, Z. Liu and J. Yong, Regularity analysis for an abstract system of coupled
hyperbolic and parabolic equations. Journal of Differential Equations 259 (2015), 4763-4798.

 \bibitem{Tebou-2020}
 V. Keyantuo, L. Tebou and M. Warma,  A Gevrey Class Semigroup for a Thermoelastic Plate Model with a Fractional Laplacian: Berween the Euler-Bernoulli and Kirchhoff Models. Discrete and Continuous Dynamical System,  Vol 40. Number 5, May (2020), pp 2875-2889.
 
\bibitem{LiuZ}
Z. Liu and S. Zheng, Semigroups associated with dissipative
systems,  Chapman \& Hall CRC Research Notes in Mathematics,
Boca Raton, FL, 398 (1999).

\bibitem{LiuR95}
Z. Liu and M. Renardy, A note on the equations of thermoelastic plate,  Appl. Math. Lett.,  8,  (1995), pp 1-6.

\bibitem{Mansouri2013}
S. Mansouri, Boundary Stabilization of Coupled Plate Equations. Palestine Journal of Mathematics 2 (2013), 233-242.

\bibitem{CMIsolaDDVescovo}
C. Maurini, F. dell'Isola and D. Del  Vescovo, Comparison of Piezoelectronic networks acting as distributed vibration absorbers. Mechanical Systems and Signal Processing  18, (2004), 1243-1271. 

\bibitem{HPOquendo}
H.P. Oquendo and F.M.S. Suárez, Exact decay rates for coupled plates with partial fractional damping. Zeitschrift für angewandte Mathematik und Physik-ZAMP(online)V1, (2019), pp
70-88.

\bibitem{OquendoRaya2017}
H.P. Oquendo and R.P. Raya, Best rates of decay for coupled waves with different propagation speeds. ZAMP Z. Angew. Math. Phys. 68 (2017), Art. 77.

\bibitem{Pazy}
A. Pazy, Semigroups of Linear Operators and Applications to
Partial Differential Equations,  Applied Mathematical Sciences 44, Springer, (1983).

\bibitem{Renardy}
  M. Renardy, On localized Kelvin-Voigt damping. ZAMM Z. Angew. Math. Mech. 84 (2004), 280-283.
  
 \bibitem{Russell-1993}
  D.L. Russell, A general framework for the study of indirect damping mechanisms in elastic systems. J. Math. Anal. Appl. 173 (1993), 339-358.
  
  \bibitem{Shibata}
  Y. Shibata, On the exponential decay of the energy of a linear thermoelastic plate. Comp. Appl. Math. 13 (1994), 81-102.
  
  \bibitem{Suarez}
F.M.S. Suárez and H.P. Oquendo, Optimal decay rates for partially dissipative plates with rotational inertia. Acta Applicandae Mathematicae(online) V1, (2019), pp 1-16.
 
 \bibitem{TaylorM}
  S. W. Taylor, Gevrey Regularity of Solutions of Evolution Equations and Boundary Control-
  lability, Thesis (Ph.D.) University of Minnesota. 1989, 182 pp.

 \bibitem{Tebou-2010}
 L. Tebou, Stabilization of some coupled hyperbolic/parabolic equations.
 Discrete Contin. Dyn. Syst. Ser. B 14 (2010), 1601-1620.
 
 \bibitem{Tebou2012}
 L. T. Tebou, Energy decay estimates for some weakly coupled Euler-Bernoulli and wave equations with indirect damping mechanisms.  Mathematical Control and Related Fields 2 (2012), 45-60.
 
 \bibitem{Tebou-2013}
 L. Tebou, Simultaneous stabilization of a system of interacting plate and membrane. Evol. Equ. Control Theory 2 (2013), 153-172.
 
 \bibitem{Tebou-2017}
 L. Tebou, Indirect stabilization of a Mindlin-Timoshenko plate. J. Math. Anal. Appl. 449 (2017), 1880-1891.
 
 \bibitem{VidoliIsola}
 S. Vidoli and F. dell'Isola, Vibrations Control in Plates Uniformly Distributed PZT Actuators Interconnected Via Electric Networks. Eur. J. Mech. A/Solids V20, (2001),  435-456.
 
\end{thebibliography}


\end{document}